\newcommand{\rooted}{RES-graph{}}
\newcommand{\rooteds}{RES-graphs{}}
\newcommand{\C}{\mathcal{C}}
\newtheorem{theorem}{Theorem}[section]
\newtheorem{lemma}[theorem]{Lemma}
\newtheorem{corollary}[theorem]{Corollary}
\newtheorem{problem}[theorem]{Problem}
\newtheorem{claim}[theorem]{Claim}
\tikzset{MyNode/.style={circle, draw, inner sep=2,outer sep=0, fill=gray}}
\tikzset{MyRedArc/.style={line width=2.5pt, red}}
\tikzset{MyBlueArc/.style={line width=2pt, blue}}
\tikzset{
  arrBlack/.style={
    postaction={
      decorate,
      decoration={
        markings,
        mark= at position .18 with {\arrow{>}},
        mark= at position .48 with {\arrow{>}},
        mark= at position .81 with {\arrow{>}},
}}}}
\tikzset{
  arrRed/.style={
    postaction={
      decorate,
      decoration={
        markings,
        mark= at position .26 with {\arrow[scale=.75]{>}},
        mark= at position .75 with {\arrow[scale=.75]{>}},
}}}}
\tikzset{
  arrRepeat/.style={
    postaction={
      decorate,
      decoration={
        markings,
        mark=between positions 0.26 and 0.85 step 31pt with {\arrow{>}},
}}}}
\tikzset{
  arrRepeatReverse/.style={
    postaction={
      decorate,
      decoration={
        markings,
        mark=between positions 0.26 and 0.85 step 31pt with {\arrow{<}},
}}}}
\tikzset{
  arrRepeatIn/.style={
    postaction={
      decorate,
      decoration={
        markings,
        mark=between positions 0.42 and 0.85 step 28pt with {\arrow{>}},
}}}}
\tikzset{
  arrRepeatInReverse/.style={
    postaction={
      decorate,
      decoration={
        markings,
        mark=between positions 0.42 and 0.85 step 28pt with {\arrow{<}},
}}}}
\tikzset{
  arrLoop/.style={
    postaction={
      decorate,
      decoration={
        markings,
        mark=between positions 0.18 and 0.9 step 30pt with {\arrow{>}},
}}}}
\tikzset{
  arrLoopReverse/.style={
    postaction={
      decorate,
      decoration={
        markings,
        mark=between positions 0.18 and 0.9 step 30pt with {\arrow{<}},
}}}}
\tikzset{
  arrLoopBot/.style={
    postaction={
      decorate,
      decoration={
        markings,
        mark=between positions 0.3 and 0.9 step 30pt with {\arrow{>}},
}}}}
\tikzset{
  arrLoopBotReverse/.style={
    postaction={
      decorate,
      decoration={
        markings,
        mark=between positions 0.3 and 0.9 step 30pt with {\arrow{<}},
}}}}
\tikzset{
  arrLoopSmaller/.style={
    postaction={
      decorate,
      decoration={
        markings,
        mark=between positions 0.26 and 0.9 step 23pt with {\arrow{>}},
}}}}
\tikzset{
  arrLoopMid/.style={
    postaction={
      decorate,
      decoration={
        markings,
        mark=between positions 0.28 and 0.9 step 46pt with {\arrow{>}},
}}}}
\begin{document}

\begin{frontmatter}[classification=text]

\title{Decomposing a Signed Graph into Rooted Circuits\let\thefootnote\relax\footnotetext{This version contains a correction to Lemma~\ref{lem:main} which adds an additional assumption about the half-edge~$h$.}\thefootnote} 

\author[rm]{Rose McCarty\thanks{Supported by the National Science Foundation under Grant No. DMS-2202961.}}

\begin{abstract}
We prove a precise min-max theorem for the following problem. Let $G$ be an Eulerian graph with a specified set of edges $S \subseteq E(G)$, and let $b$ be a vertex of $G$. Then what is the maximum integer $k$ so that the edge-set of $G$ can be partitioned into $k$ non-zero $b$-trails? That is, each trail must begin and end at $b$ and contain an odd number of edges from~$S$.
    
This theorem is motivated by a connection to vertex-minors and yields two conjectures of M\'{a}\v{c}ajov\'{a} and \v{S}koviera as corollaries.
\end{abstract}
\end{frontmatter}

\section{Introduction}

We prove a precise min-max theorem (stated as Theorem~\ref{thm:mainFlooding}) for the following problem. We consider finite graphs that are allowed to have loops and multiple edges. Informally, a \textit{signed graph} is a graph whose edges are labelled by the two element group $\mathbb{Z}_2$. (It is more standard to specify a signed graph by its set of edges with label~$1$, but this other formulation is more convenient for us.) A \textit{circuit} is a closed trail, that is, a walk which begins and ends at the same vertex and has no repeated edges (though it may visit vertices multiple times). A circuit \textit{hits} a vertex $b$ if it has an edge incident to $b$, and is \textit{non-zero} if the sum of the labels of its edges is~$1$ (instead of~$0$). A \emph{circuit-decomposition} is a collection of circuits so that each edge of the graph is used by exactly one circuit in the collection.

\begin{problem}
\label{problem:flooding}
Given a signed graph with a vertex $b$, what is the maximum size of a circuit-decomposition where each circuit is non-zero and hits~$b$? 
\end{problem}

\noindent If there is no such circuit-decomposition (for instance if the graph is not Eulerian), then we consider the maximum to be~$0$. 

A major motivation for Problem~\ref{problem:flooding} is a connection with vertex-minors due to Bouchet~\cite{BouchetCircleChar} and Kotzig~\cite{KotzigImmersions}. Roughly, the \textit{vertex-minors} of a simple graph $G$ are the graphs that can be obtained from $G$ by performing ``local complementations at vertices'' (that is, by replacing the induced subgraph on the neighborhood of a vertex by its complement) and by deleting vertices. Vertex-minors were discovered by Bouchet~\cite{isoSystems, graphicIsoSystems} through his work on isotropic systems and have since found many applications. In particular, they have been used to characterize circle graphs~\cite{BouchetCircleChar} and classes of bounded rank-width~\cite{gridThmVM} and rank-depth~\cite{shrubVM}.

Oum~\cite[Question~6]{RWSurvey} conjectures that \emph{every} graph class which is closed under vertex-minors and isomorphism can be characterized by finitely many obstructions. This is true for classes of circle graphs (using well-quasi-ordering for immersion minors~\cite{graphMinors20WQOImmersions} and the work of Kotzig~\cite{KotzigImmersions}) and for classes of bounded rank-width~\cite{RWandWQO}. Motivated by Oum's conjecture, Geelen conjectures that every proper vertex-minor-closed class has a very simple structure; this conjecture is analogous to the Graph Minors Structure Theorem of Robertson and Seymour~\cite{graphMinors16Structure}, but for vertex-minors instead of minors. The conjectured structure is based on the known cases mentioned above; see~\cite{McCartyThesis} for a formal statement. 

We believe that our main theorem, Theorem~\ref{thm:mainFlooding}, will be useful in solving these conjectures of Oum and Geelen. Unfortunately, we also believe that we will need a more general theorem about decomposing graphs whose edges are labelled by the group $\mathbb{Z}_2^k$ (that is, the unique abelian group with $k$ generators, all of order~$2$). In joint work with Jim Geelen and Paul Wollan, we believe that we can prove such a min-max theorem, even for arbitrary groups. That paper is in preparation, but we present the case of signed graphs separately for several reasons. First of all, the other proof (and even the other theorem statement) are considerably more technical for general groups. 

Secondly, the proof we present here for signed graphs relies on finding a matroid underlying Problem~\ref{problem:flooding}. While we believe that there is also a matroid underlying the group-labelled version of Problem~\ref{problem:flooding}, we do not know how to use that matroid to prove a min-max theorem for any other group. It is tempting to hope that this connection might eventually lead to a common generalization with the theorem of Chudnovsky, Geelen, Gerards, Goddyn, Lohman, and Seymour~\cite{ChudnovskyAPaths}; they used a similar matroid-based approach to prove a min-max theorem for vertex-disjoint non-zero ``rooted'' paths in group-labelled graphs. Their proof is in turn based on a short proof of the Tutte–Berge Formula using the matching matroid (see~\cite{ChudnovskyAPaths}). 

Moreover, the case of signed graphs lets us prove two conjectures of M\'{a}\v{c}ajov\'{a} and \v{S}koviera~\cite{MacajovaOddEul} as corollaries. The first of these corollaries is particularly interesting because it does not have a fixed ``root'' vertex.

\begin{restatable}{corollary}{corMS}
\label{cor:corMS}
For any positive integer $\ell$ and any connected $2\ell$-regular graph with an odd number of vertices, there exists a circuit-decomposition of size~$\ell$ where all circuits have an odd number of edges and begin and end at the same vertex. 
\end{restatable}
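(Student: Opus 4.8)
The plan is to apply the min-max theorem (Theorem~\ref{thm:mainFlooding}) to the signed graph obtained by labelling \emph{every} edge of the given graph $H$ with $1 \in \mathbb{Z}_2$. With this labelling a circuit is non-zero precisely when it uses an odd number of edges, so a circuit-decomposition of $H$ into $\ell$ odd closed trails through a common vertex $b$ is exactly a circuit-decomposition of size $\ell$ in which every circuit is non-zero and hits $b$ (after rotating each closed trail so that it begins and ends at $b$). Thus it suffices to produce a single vertex $b$ for which the maximum in Problem~\ref{problem:flooding} equals $\ell$; the interest of the corollary, as noted above, is exactly that we are free to choose this root rather than having it imposed.

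First I would record the matching upper bound. Since $H$ is $2\ell$-regular, $b$ is incident with exactly $2\ell$ edges, and any closed trail that hits $b$ uses an even number---hence at least two---of these edges. Summing over the circuits of any decomposition in which every circuit hits $b$ gives $2\ell \ge 2 \cdot (\text{number of circuits})$, so the maximum in Problem~\ref{problem:flooding} is at most $\ell$ for \emph{every} choice of $b$. Consequently the corollary is equivalent to the assertion that this maximum attains the value $\ell$ for some $b$, and by Theorem~\ref{thm:mainFlooding} this is in turn equivalent to showing that the corresponding minimum---the obstruction furnished by the min-max theorem---is at least $\ell$ for a suitable root. Note also that equality forces each of the $\ell$ circuits to use exactly two edges at $b$, so the decomposition is governed by a pairing (a transition system) of the $2\ell$ edges at $b$ into $\ell$ pairs.

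The heart of the argument---and the step I expect to be the main obstacle---is this lower bound on the obstruction, and here all three hypotheses enter. Connectivity and $2\ell$-regularity keep the obstruction from being lowered by small or unbalanced cuts, while the assumption that $H$ has an odd number of vertices supplies the decisive parity: because every edge is labelled $1$, the total label sum is $|E(H)| = \ell\,|V(H)| \equiv \ell \pmod 2$ exactly when $|V(H)|$ is odd. Since in any circuit-decomposition the number of non-zero circuits is congruent to the total label sum, this is precisely the parity needed for all $\ell$ circuits through $b$ to be non-zero at once; were $|V(H)|$ even and $\ell$ odd, the maximum could not exceed $\ell-1$. I would therefore show that, with $b$ chosen appropriately, this necessary parity condition is also sufficient to keep the min-max obstruction from dropping below $\ell$, exploiting the freedom in the choice of $b$ to avoid any deficient certificate, and then invoke Theorem~\ref{thm:mainFlooding} to convert the resulting bound into the desired decomposition of size $\ell$.
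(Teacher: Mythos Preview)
Your setup is correct: with the all-$1$ signature, the target is exactly $\tilde{\nu}(G,\gamma,b)=\ell$ for some $b$, and the upper bound and parity remarks are fine. The gap is the sentence ``I would therefore show that, with $b$ chosen appropriately, \ldots\ the min-max obstruction [does not] drop below $\ell$.'' You give no mechanism for choosing $b$ and no argument controlling the certificate, and this is precisely where the difficulty lies. If $G$ has a small cut---say $|\delta(S)|=2d<2\ell$---then for \emph{any} $b$ there may be certificates $(X,\gamma')$ in which components of $G-X$ sit behind $2$-edge cuts; such a component, if $\gamma'$-odd, contributes $|\delta(Y)|/2-1=0$ to the bound, and nothing you have written rules this out. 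The freedom to pick $b$ does not by itself prevent this, and there is no transparent way to run the certificate analysis of Theorem~\ref{thm:mainFlooding} without some edge-connectivity hypothesis.

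The paper does \emph{not} prove Corollary~\ref{cor:corMS} by a direct application of Theorem~\ref{thm:mainFlooding}. It first establishes an intermediate rooted statement (Corollary~\ref{cor:almostReg}): if $G$ has an odd number of vertices, a root $b$ of degree $2d\le 2\ell$, all other vertices of degree $2\ell$, and is $2d$-edge-connected, then $\tilde{\nu}(G,\gamma,b)=d$. The certificate analysis there uses $2d$-edge-connectivity in an essential way to force $G-X$ to have at most one component and to pin down the sizes $|A|,|B|$ of the shifted/unshifted parts of $X$. Corollary~\ref{cor:corMS} is then proved by induction on $|V(G)|$: if $G$ is $2\ell$-edge-connected, apply Corollary~\ref{cor:almostReg} at any vertex; otherwise take a minimum cut $\delta(S)$ with $|S|$ odd, contract $S$ to a single vertex $b$ to obtain a graph satisfying the hypotheses of Corollary~\ref{cor:almostReg}, extract $d$ odd $b$-trails, replace them by $d$ new edges in $G[S]$ to get a smaller $2\ell$-regular graph with an odd number of vertices, and apply the inductive hypothesis. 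The eventual root is produced by this induction, not selected in advance; your proposal is missing both the intermediate edge-connected lemma and the contraction/induction that reduces to it.
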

\noindent We prove Corollary~\ref{cor:corMS} by reducing it to (a very slight strengthening of) the other conjecture of  M\'{a}\v{c}ajov\'{a} and \v{S}koviera from~\cite{MacajovaOddEul}. That other corollary does have a fixed root vertex and follows directly from our min-max theorem.

Finally, we use the min-max theorem to relate Problem~\ref{problem:flooding} to well-known ``packing problems'' for signed graphs. These problems ask for a collection of edge-disjoint circuits instead of a circuit-decomposition. Such problems have been particularly well-studied in relation to the Erd\H{o}s-P\'{o}sa Property; see~\cite{ChurchleyPacking, packingRooted, ErdosPosa4EdgeConn, ReedEscherWall}. Furthermore, Churchley~\cite[Lemma~3.5]{Churchley17} observed that a min-max theorem for the packing version of Problem~\ref{problem:flooding} follows from the theorem in~\cite{ChudnovskyAPaths} mentioned above. The following corollary of Theorem~\ref{thm:mainFlooding} shows that ``packing'' and ``decomposing'' are related when the graph has an Eulerian circuit and a little edge-connectivity.

\begin{restatable}{corollary}{corPacking}
\label{cor:packingCovering}
For any signed $4$-edge-connected Eulerian graph and any vertex $b$, if there is a collection of $\ell$ edge-disjoint non-zero circuits which hit $b$, then there is a circuit decomposition of size $\lceil \ell /2 \rceil$ where each circuit is non-zero and hits $b$.
\end{restatable}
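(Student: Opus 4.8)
The plan is to reduce the corollary to a single clean inequality. Write $\nu$ for the maximum number of edge-disjoint non-zero circuits hitting $b$, and $\delta$ for the maximum size of a circuit-decomposition into non-zero circuits hitting $b$ (that is, the quantity studied in Problem~\ref{problem:flooding}). Every such decomposition is in particular a packing, so $\delta \le \nu$ is trivial; the real content of the corollary is the opposite, quantitative bound $\nu \le 2\delta$. Granting this inequality, the corollary follows immediately: a packing of size $\ell$ witnesses $\ell \le \nu \le 2\delta$, hence $\delta \ge \ell/2$, and since $\delta$ is an integer we get $\delta \ge \lceil \ell/2\rceil$. Thus a maximum decomposition has at least $\lceil \ell/2\rceil$ parts, which is what is required. (If a decomposition of size exactly $\lceil \ell/2\rceil$ is wanted, one can merge non-zero circuits three at a time through $b$, lowering the count by two while keeping each part non-zero and incident to $b$.)

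To prove $\nu \le 2\delta$ I would combine Theorem~\ref{thm:mainFlooding} with the packing min-max of Churchley~\cite[Lemma~3.5]{Churchley17}, which he derived from~\cite{ChudnovskyAPaths}. Applying Theorem~\ref{thm:mainFlooding} to $(G,b)$ produces an optimal obstruction $\mathcal{O}$ of value exactly $\delta$. I would then argue that this same $\mathcal{O}$ certifies $\nu \le 2\delta$: every non-zero circuit hitting $b$ must interact with $\mathcal{O}$, and because the edges of a packing are pairwise disjoint, only boundedly many circuits can do so at once. The factor of two is where the hypotheses matter: a packing consists of separate closed trails, each of which crosses the cuts recorded by $\mathcal{O}$ independently, whereas in a decomposition a single trail can traverse the same region on several of its passes, and $4$-edge-connectivity forces every cut in play to have at least four edges, preventing a cheap obstruction from admitting disproportionately many edge-disjoint circuits. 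Equivalently, and more symmetrically, one can express both $\nu$ and $\delta$ as minima of the \emph{same} obstruction functional (Churchley's for packings, Theorem~\ref{thm:mainFlooding} for decompositions) and verify the pointwise estimate comparing the two, again using $4$-edge-connectivity to supply the needed slack.

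The step I expect to be the main obstacle is precisely this comparison of the two min-max certificates. One must check that the obstructions appearing in Theorem~\ref{thm:mainFlooding} and in the packing theorem are genuinely of a common type, carry out the charging that yields the constant $2$ rather than some weaker constant, and isolate the exact point at which $4$-edge-connectivity is invoked. A secondary subtlety is parity: since every circuit is required to be non-zero, I would need to confirm that the odd-weight condition does not force an additional loss beyond the factor of two. Finally, I would exhibit $4$-edge-connected Eulerian examples attaining $\nu = 2\delta$—graphs with a single bottleneck that every non-zero circuit must cross, so that a packing keeps its crossings in separate circuits while any decomposition is forced to reuse the bottleneck along one long trail—both to confirm sharpness of the constant and to show that the connectivity hypothesis cannot be weakened.
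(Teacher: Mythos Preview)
Your plan is correct and is essentially the paper's argument: take a certificate $(X,\gamma')$ from Theorem~\ref{thm:mainFlooding} and compare the decomposition functional $\gamma'(E(X))+\tfrac12|\delta(X)|-\mathrm{odd}_{\gamma'}(G-X)$ to the packing functional $\gamma'(E(X))+\tfrac12|\delta(X)|$, using $4$-edge-connectivity for the slack.

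Two points will sharpen it considerably. First, you do not need Churchley's min-max at all: the only packing fact required is the \emph{easy} direction (inequality~(\ref{eqn:notTight})), namely that each of the $\ell$ edge-disjoint non-zero circuits hitting $b$ must use a non-zero edge of $E(X)$ or at least two edges of $\delta(X)$, so $\ell \le \gamma'(E(X))+\tfrac12|\delta(X)|$. Second, the ``main obstacle'' and the ``parity subtlety'' you flag are a single one-line estimate, not two separate worries: since every component of $G-X$ has at least four boundary edges, $\mathrm{odd}_{\gamma'}(G-X)\le \tfrac14|\delta(X)|$. Doubling the certificate equality then gives
\[
2\tilde\nu(G,\gamma,b)=2\gamma'(E(X))+|\delta(X)|-2\,\mathrm{odd}_{\gamma'}(G-X)\ \ge\ \gamma'(E(X))+\tfrac12|\delta(X)|\ \ge\ \ell,
\]
which is exactly $\nu\le 2\delta$. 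That is the whole proof; no further charging or comparison of certificate types is needed.
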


\noindent The bound is best possible, and $4$-edge-connectivity is necessary. 

This paper is adapted from Chapter~4 of the author's PhD thesis~\cite{McCartyThesis}. In Section~\ref{sec:minMax} we give some important definitions and state the min-max theorem (Theorem~\ref{thm:mainFlooding}). In Section~\ref{sec:matroid} we define the matroid and prove that it is, in fact, a matroid. Finally, in Section~\ref{sec:reduction} we complete the proof of Theorem~\ref{thm:mainFlooding}, and in Section~\ref{sec:cor} we prove its corollaries.

\section{The min-max theorem}
\label{sec:minMax}
In this section we give some preliminary definitions, state the min-max theorem (Theorem~\ref{thm:mainFlooding}), and outline its proof.

\subsection*{Preliminaries}
We use standard graph-theoretic notation; see Diestel~\cite{diestelBook}. For a graph $G$ with a set of vertices $X$, we write $E(X)$ (respectively $\delta(X)$) for the set of edges of $G$ with both ends (respectively, exactly one end) in $X$. We write $G-X$ for the induced subgraph of $G$ on vertex-set $V(G)-X$. If $v$ is a vertex of $G$, then we write $G-v$ for $G-\{v\}$ and $\deg(v)$ for the degree of~$v$. 

We think of graphs as having half-edges; this formulation is used to resolve technical issues with loops. So an \emph{edge} is an unordered pair of half-edges and an \emph{arc} is an ordered pair of half-edges. (It is convenient to use arcs so that trails have a defined ``beginning'' and ``end''.) Thus an edge $\{h_1, h_2\}$ has two corresponding arcs, $(h_1, h_2)$ and $(h_2, h_1)$. The \emph{tail} (respectively \emph{head}) of an arc $(h_1, h_2)$ is the vertex that is incident to $h_1$ (respectively $h_2$). A \emph{trail} is then a sequence of arcs so that the corresponding edges are all distinct and the head of each arc, other than the last one, is the tail of the next. The \emph{tail} of a trail is the tail of its first arc, and the \emph{head} of a trail is the head of its last arc. 

A \emph{subtrail} of a trail $T$ is any trail which can be obtained from $T$ by deleting zero or more arcs at its beginning and end. A \emph{circuit} is a trail which has the same head and tail. If $C$ is a circuit whose sequence of arcs is $a_1, \ldots, a_t$, then we say that any circuit of the form $a_i, a_{i+1}, \ldots, a_t, a_1, a_2, \ldots, a_{i-1}$ is \emph{obtained from $C$ by cyclically re-ordering its arcs}. Thus a circuit \emph{hits} a vertex $v$ if it can be cyclically re-ordered so as to have $v$ as its tail and head (or, equivalently, if it contains an arc which is incident to $v$). A \emph{circuit-decomposition} is a collection of circuits so that each edge of the graph is used by exactly one circuit in the collection. A graph is \emph{Eulerian} if it is connected and every vertex has even degree (or, equivalently, if it has an Eulerian circuit).

A \textit{signed graph} is a tuple $(G, \gamma)$ so that $G$ is a graph and $\gamma$ is a function from the edge-set of $G$ to the $2$-element group $\mathbb{Z}_2$. The function $\gamma$ is called a \textit{signature of $G$}. (This formulation is non-standard; $\gamma$ is typically specified by the set of edges of $G$ which are sent to $1$. However we find this functional formulation more convenient for our purposes.) Given a signed graph $(G, \gamma)$, the \textit{weight} of an edge $e$ is the corresponding group element $\gamma(e)$. The \textit{weight} of a trail $T$ is the sum (in $\mathbb{Z}_2$) of the weights of the edges of $T$; it is denoted by $\gamma(T)$. An edge or a trail is called \textit{zero} or \textit{non-zero} depending on its weight.


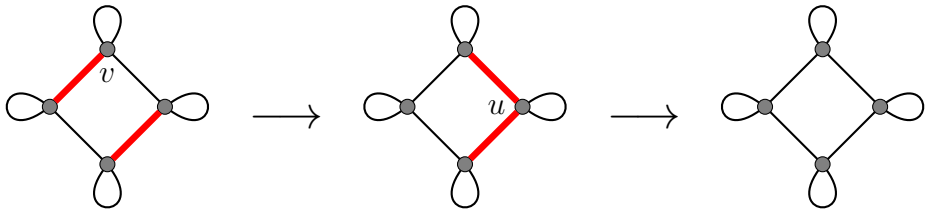
\begin{figure}
\centering
\begin{tikzpicture}[scale =.85, every node/.style={MyNode}]
    \def \dif {2*.45}
    \def \loop {35}
    \def \width {.75}
    \node[draw=none, fill=none] (Y0) at (-1.5*\width, 0) {};
    \node[draw=none, fill=none] (Y3) at (1.5*\width, 0) {};
    \draw[draw=none, fill=none] (Y0) to [out={270+90*3-\loop},in={270+90*3+\loop},looseness=18] (Y0);
    \draw[draw=none, fill=none] (Y3) to [out={270+90-\loop},in={270+90+\loop},looseness=18] (Y3);
    \node (X0) at (0, -\dif) {};
    \node (X1) at (\dif, 0) {};
    \node[label=below:{$v$}] (X2) at (0, \dif) {};
    \node (X3) at (-\dif, 0) {};
    \node[draw=none, fill=none] at (0, -2*.75) {};
    \node[draw=none, fill=none] at (0, 2*.75) {};
    \draw[MyRedArc] (X0) -- (X1);
    \draw[thick] (X1) -- (X2);
    \draw[MyRedArc] (X2) -- (X3);
    \draw[thick] (X3) -- (X0);
    \foreach \i in {0,..., 3}{
        \draw[thick] (X\i) to [out={270+90*\i-\loop},in={270+90*\i+\loop},looseness=18] (X\i);
    }
\end{tikzpicture}\hskip .125cm
\begin{tikzpicture}[scale=.85, every node/.style={MyNode}]
    \def \r {2}
    \node[draw=none, fill=none] at (90:\r) {};
    \node[rectangle, draw=none, fill=none] at (0, -2*.75) {};
    \node[rectangle, draw=none, fill=none] (center) at (0, 0) {\Large$\longrightarrow$};
\end{tikzpicture}\hskip .125cm
\begin{tikzpicture}[scale =.85, every node/.style={MyNode}]
    \def \dif {2*.45}
    \def \loop {35}
    \def \width {.75}
    \node[draw=none, fill=none] (Y0) at (-1.5*\width, 0) {};
    \node[draw=none, fill=none] (Y3) at (1.5*\width, 0) {};
    \draw[draw=none, fill=none] (Y0) to [out={270+90*3-\loop},in={270+90*3+\loop},looseness=18] (Y0);
    \draw[draw=none, fill=none] (Y3) to [out={270+90-\loop},in={270+90+\loop},looseness=18] (Y3);
    \node (X0) at (0, -\dif) {};
    \node[label=left:{$u$}] (X1) at (\dif, 0) {};
    \node (X2) at (0, \dif) {};
    \node (X3) at (-\dif, 0) {};
    \node[draw=none, fill=none] at (0, -2*.75) {};
    \node[draw=none, fill=none] at (0, 2*.75) {};
    \draw[MyRedArc] (X0) -- (X1);
    \draw[MyRedArc] (X1) -- (X2);
    \draw[thick] (X2) -- (X3);
    \draw[thick] (X3) -- (X0);
    \foreach \i in {0,..., 3}{
        \draw[thick] (X\i) to [out={270+90*\i-\loop},in={270+90*\i+\loop},looseness=18] (X\i);
    }
\end{tikzpicture}\hskip .125cm
\begin{tikzpicture}[scale=.85, every node/.style={MyNode}]
    \def \r {2}
    \node[draw=none, fill=none] at (90:\r) {};
    \node[rectangle, draw=none, fill=none] at (0, -2*.75) {};
    \node[rectangle, draw=none, fill=none] (center) at (0, 0) {\Large$\longrightarrow$};
\end{tikzpicture}\hskip .125cm
\begin{tikzpicture}[scale =.85, every node/.style={MyNode}]
    \def \dif {2*.45}
    \def \loop {35}
    \def \width {.75}
    \node[draw=none, fill=none] (Y0) at (-1.5*\width, 0) {};
    \node[draw=none, fill=none] (Y3) at (1.5*\width, 0) {};
    \draw[draw=none, fill=none] (Y0) to [out={270+90*3-\loop},in={270+90*3+\loop},looseness=18] (Y0);
    \draw[draw=none, fill=none] (Y3) to [out={270+90-\loop},in={270+90+\loop},looseness=18] (Y3);
    \node (X0) at (0, -\dif) {};
    \node (X1) at (\dif, 0) {};
    \node (X2) at (0, \dif) {};
    \node (X3) at (-\dif, 0) {};
    \node[draw=none, fill=none] at (0, -2*.75) {};
    \node[draw=none, fill=none] at (0, 2*.75) {};
    \draw[thick] (X0) -- (X1);
    \draw[thick] (X1) -- (X2);
    \draw[thick] (X2) -- (X3);
    \draw[thick] (X3) -- (X0);
    \foreach \i in {0,..., 3}{
        \draw[thick] (X\i) to [out={270+90*\i-\loop},in={270+90*\i+\loop},looseness=18] (X\i);
    }
\end{tikzpicture}
\caption{Shifting a signature at a vertex $v$ and then a vertex $u$. Throughout the paper, non-zero edges are depicted in bold red.}
\label{fig:shifting}
\end{figure}

Signatures are only used to specify which circuits of a graph are zero/non-zero; so there is an equivalence relation on signatures as follows. First, \emph{shifting at} a vertex means to add~$1$ to the weight of each incident non-loop edge; see Figure~\ref{fig:shifting} for an example. (We note that this operation is more commonly called ``switching'', as in the survey on signed graphs by Zaslavsky~\cite{ZaslavskySignedMatroid}. We opt to use the word ``shifting'' just for consistency with the setting of group-labelled graphs, as in~\cite{ChudnovskyAPaths}.) A \emph{shifting} of a signature $\gamma$ is any signature that can be obtained from $\gamma$ by performing a sequence of shiftings at vertices. Equivalently, a shifting is obtained from $\gamma$ by adding $1$ to the weight of each edge in a cut. Thus shifting is an equivalence relation that does not change the weight of any circuit. (Harary~\cite{HararyBalanced} also proved a converse; if each circuit of a graph has the same weight according to two signatures, then the signatures are shiftings of each other.)

Throughout the paper we are interested in ``rooted Eulerian signed'' graphs; so we call an \emph{\rooted{}} a tuple $(G, \gamma, b)$ so that $G$ is an Eulerian graph, $\gamma$ is a signature of $G$, and $b$ is a vertex of $G$. We call $b$ the \emph{root} of $(G, \gamma, b)$. We write $\tilde{\nu}(G, \gamma, b)$ for the answer to Problem~\ref{problem:flooding}; that is, $\tilde{\nu}(G, \gamma, b)$ is the maximum size of a circuit-decomposition where each circuit is non-zero and hits $b$. We call $\tilde{\nu}(G, \gamma, b)$ the \emph{flooding number} of $(G, \gamma, b)$, and consider it to be zero if no such circuit-decomposition exists.

\subsection*{Theorem statement}
Let us consider why an \rooted{} $(G, \gamma, b)$ might have small flooding number.

One reason is that, after shifting, there is a small edge-cut so that the side containing $b$ has few non-zero edges. Formally, for a set of edges $F$ and a shifting $\gamma'$ of $\gamma$, we write $\gamma'(F)$ for the number of non-zero edges in $F$ according to $\gamma'$. Using this notation we can state the following upper bound;  \begin{align}
    \label{eqn:notTight}
    \tilde{\nu}(G, \gamma, b) \leq \min_{\gamma', X}\left(\gamma'(E(X))+\frac{1}{2}|\delta(X)|\right),
\end{align} where the minimum is taken over all shiftings $\gamma'$ of $\gamma$ and all sets of vertices $X$ which contain~$b$.

If we did not require the edge-sets of the circuits to partition the edge-set of $G$, but just to be disjoint, then inequality~(\ref{eqn:notTight}) would be tight; this fact was observed by Churchley~\cite[Lemma~3.5]{Churchley17} following from~\cite{ChudnovskyAPaths}. For the flooding number, however, inequality~(\ref{eqn:notTight}) is not tight. Intuitively, this is because parity matters; since we are interested in circuit-decompositions, the flooding number must have the same parity as $\gamma(E(G))$. So in Figure~\ref{fig:floodingDiff}, for instance, the flooding number must be odd. Therefore, while that example has~$\deg(b)/2=4$ edge-disjoint non-zero circuits which hit $b$, its flooding number is just three.

\begin{figure}
\centering
\begin{tikzpicture}[scale=1, every node/.style={MyNode}]
    \def \w {2}
    \node (A1) at (0,0) {};
    \node (A2) at (\w,0) {};
    \node (A3) at (2*\w,0) {};
    \node (A4) at (3*\w,0) {};
    \node[label=below:$b$] (a) at (1.5*\w,-1.5) {};
    \draw[thick] (A1) to (A2);
    \draw[thick] (A2) to (A3);
    \draw[thick] (A3) to (A4);
    \draw[MyRedArc] (A1) to [bend left=25] (A4);
    \foreach \i in {1,4}{
        \draw[MyRedArc] (a) to [bend left=12] (A\i);
        \draw[thick] (a) to [bend right=12] (A\i);
    }\foreach \i in {2,3}{
        \draw[MyRedArc] (a) to [bend left=18] (A\i);
        \draw[thick] (a) to [bend right=18] (A\i);
    }

\end{tikzpicture}
\caption{An \rooted{} with four edge-disjoint non-zero circuits which hit $b$, but with flooding number three.}
\label{fig:floodingDiff}
\end{figure}
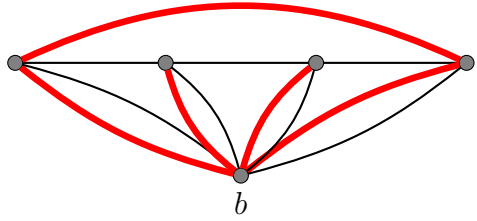

It turns out that ``parity'' is the only possible problem; the min-max theorem says that if we subtract one for each component of $G-X$ where ``the parity is wrong'', then inequality~(\ref{eqn:notTight}) becomes tight. To state this formally, let $(G, \gamma, b)$ be an \rooted{}, and let $\gamma'$ be a shifting of $\gamma$. A set of vertices $Y$ is \emph{$\gamma'$-odd} if the parity of $\gamma'(E(Y) \cup \delta(Y))$ is different from the parity of $|\delta(Y)|/2$. Then, for a set of vertices $X$ which contains $b$, we write $\mathrm{odd}_{\gamma'}(G-X)$ for the number of components of $G-X$ whose vertex-set is $\gamma'$-odd. Now we can state the theorem.

\begin{restatable}{theorem}{thmMain}
\label{thm:mainFlooding}
For any \rooted{} $(G, \gamma,b)$,\begin{align*}
    \tilde{\nu}(G, \gamma, b) = \min_{\gamma', X} \left(\gamma'(E(X))+\frac{1}{2}|\delta(X)|-{\mathrm{odd}}_{\gamma'}(G-X)\right),
\end{align*}
where the minimum is taken over all shiftings $\gamma'$ of $\gamma$ and all sets of vertices $X$ which contain~$b$.
\end{restatable}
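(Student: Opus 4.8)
The plan is to prove the two inequalities separately, as is standard for min-max theorems. The direction $\tilde{\nu}(G,\gamma,b) \leq \min_{\gamma',X}(\cdots)$ should be the easier ``weak duality'' direction and I would establish it first by a direct counting argument. Fix any circuit-decomposition $\C$ into $k$ non-zero circuits hitting $b$, and fix any shifting $\gamma'$ and any vertex-set $X \ni b$. Since each circuit is non-zero, each contributes an odd number of non-zero edges under $\gamma'$ (shifting preserves circuit weights), so the total count of non-zero edges with multiplicity is at least $k$ in parity terms. I would then localize this to $X$: each circuit in $\C$, being a closed trail hitting $b \in X$, crosses $\delta(X)$ an even number of times, and its non-zero edges lying inside $E(X)$ together with its crossings of $\delta(X)$ control its parity. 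Summing over $\C$ and comparing with $\gamma'(E(X)) + \tfrac{1}{2}|\delta(X)|$ gives the bound $k \leq \gamma'(E(X)) + \tfrac{1}{2}|\delta(X)|$; the subtraction of $\mathrm{odd}_{\gamma'}(G-X)$ should come from a more careful parity accounting on each $\gamma'$-odd component of $G-X$, where the mismatch between $\gamma'(E(Y)\cup\delta(Y))$ and $|\delta(Y)|/2$ forces at least one ``wasted'' unit per such component.

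For the hard direction, $\tilde{\nu}(G,\gamma,b) \geq \min_{\gamma',X}(\cdots)$, the excerpt strongly signals the intended route: find a matroid underlying Problem~\ref{problem:flooding} and invoke matroid machinery, in analogy with the matching-matroid proof of Tutte--Berge used in~\cite{ChudnovskyAPaths}. The strategy I would pursue is to encode partial progress toward a circuit-decomposition as independent sets of a matroid on a suitable ground set (likely built from half-edges or arcs at $b$, or from the family of admissible non-zero $b$-trails), so that the maximum flooding number equals the rank, or a rank-like parameter, of this matroid. Then a rank formula or a covering/matroid-union argument would translate the combinatorial maximum into the minimum on the right-hand side. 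The reduction in Section~\ref{sec:reduction} presumably ``splits off'' edges at vertices (a classical Eulerian technique) to reduce the general case to a base case handled by the matroid; I would mirror that by inducting on $|E(G)|$, at each step either finding a valid non-zero $b$-circuit to peel off (reducing $k$ and all relevant quantities compatibly) or exhibiting a tight pair $(\gamma', X)$ certifying that the current flooding number already meets the bound.

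I anticipate two main obstacles. The first and most serious is constructing the matroid and proving its exchange axiom: the entire leverage of the approach rests on showing that the relevant family of ``partial decompositions'' forms the independent sets of a matroid, and verifying the augmentation property for objects as intricate as collections of edge-disjoint non-zero trails is exactly where the real work lies (the excerpt devotes a whole section, Section~\ref{sec:matroid}, to this). The second obstacle is the parity bookkeeping that produces the correction term $\mathrm{odd}_{\gamma'}(G-X)$: unlike the looser bound~(\ref{eqn:notTight}), the exact theorem hinges on tracking the parity discrepancy on each component of $G-X$ and matching it to the deficiency in a maximum ``matching'' within the matroid. I would expect the cleanest path is to define $\gamma'$-oddness so that each $\gamma'$-odd component behaves like an odd component in a Tutte--Berge deficiency formula, and then transport a Berge--Tutte-style deficiency identity through the matroid to obtain the stated minimum exactly.
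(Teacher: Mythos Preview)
Your outline for the easy direction is essentially the paper's Lemma~\ref{lem:easyDirection}. For the hard direction, you have correctly guessed that a matroid is involved, but you have misidentified its role, and the plan as written would not close. The flooding matroid $M(G,\gamma,b)$ does \emph{not} have rank equal to $\tilde{\nu}$; its ground set is all pairs $(f,\alpha)$ with $f$ an arc and $\alpha\in\{0,1\}$, its bases are ``systems of representatives'' (one pair per \emph{zero} circuit in an optimal flooding), and hence its rank is the deficiency $\deg(b)/2-\tilde{\nu}$. There is no rank formula, no matroid-union step, and no direct translation of rank into the right-hand side; the matroid is used only qualitatively, to force the rank down to~$1$ in a minimal counterexample.

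The actual mechanism, which your sketch does not anticipate, runs as follows. One takes a counterexample minimizing $|E(G-b)|$ and defines an \emph{$e$-reduction}: for an edge $e$ of $G-b$, delete $e$ and add two new edges from its endpoints to $b$ with weights summing to $\gamma(e)$. This is not classical splitting-off. Two lemmas then do the work: (i) if some $e$-reduction has flooding number at most $\tilde{\nu}+1$, its certificate lifts back to $(G,\gamma,b)$; (ii) otherwise every arc $f$ of $G-b$ has both $(f,0)$ and $(f,1)$ as non-loop elements of the matroid. A short ``transitivity'' lemma (Lemma~\ref{lem:transitivity}: no basis contains $(f_0,0)$ and $(f_1,1)$ when $f_0,f_1$ share a head) together with connectivity of $G-b$ then forces all non-loop elements into a single parallel class, so the matroid has rank~$1$. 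Hence $\tilde{\nu}=\deg(b)/2-1$, and the parity mismatch immediately makes $(\{b\},\gamma)$ a certificate with one odd component. Your Tutte--Berge analogy is suggestive but misleading here: the odd-component term is not recovered from a deficiency identity inside the matroid; it emerges only in the trivial rank-$1$ endgame, and the $e$-reduction plus certificate-lifting is the engine you are missing.
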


We go ahead and prove the easy direction of Theorem~\ref{thm:mainFlooding} now: that the right-hand side of the equation is an upper-bound for the flooding number.

\begin{lemma}
\label{lem:easyDirection}
For any \rooted{} $(G, \gamma, b)$, shifting $\gamma'$ of $\gamma$, and set of vertices $X$ which contains $b$,\begin{align}
    \label{inequality}
    \tilde{\nu}(G, \gamma, b) \leq \gamma'(E(X))+\frac{1}{2}|\delta(X)|-\mathrm{odd}_{\gamma'}(G-X).
\end{align}
\end{lemma}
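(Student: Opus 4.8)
The plan is to fix a circuit-decomposition $\mathcal{C}$ of size $\tilde{\nu}(G,\gamma,b)$ realizing the flooding number, so every circuit in $\mathcal{C}$ is non-zero and hits $b$, and then to bound $|\mathcal{C}|$ from above by counting how each circuit must interact with the cut $\delta(X)$ and the edges $E(X)$ inside $X$. Since shifting does not change the weight of any circuit, I may replace $\gamma$ by $\gamma'$ throughout; so I assume $\gamma = \gamma'$ and drop the prime. The crux is that each circuit $C \in \mathcal{C}$, being non-zero, must use at least one non-zero edge of $G$, and being a circuit that hits $b \in X$, it must ``pay'' either with a non-zero edge inside $X$ or by crossing the cut $\delta(X)$. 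The term $\frac{1}{2}|\delta(X)|$ counts the total number of times circuits can cross into and out of $X$ (each crossing uses a distinct cut edge, and a closed trail crosses an even number of times), and $\gamma(E(X))$ counts the non-zero edges available inside $X$.

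First I would set up the counting. For each circuit $C$, consider its restriction to $X$: since $C$ hits $b$ and $b \in X$, cyclically re-order $C$ to start and end at $b$, so $C$ decomposes into maximal subtrails lying in $X$ (the ``$X$-segments'') alternating with subtrails lying in $G-X$; the transitions between them are exactly the edges of $\delta(X)$ used by $C$. The key parity observation is that $C$ is non-zero, so $\gamma(C)=1$; writing $\gamma(C)$ as the sum of the weights contributed inside $X$, inside $G-X$, and by the crossing edges in $\delta(X)$, I get that the weight $C$ contributes inside $X$ together with its crossing edges has a prescribed parity depending on the parity of the weight $C$ picks up in $G-X$.

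The heart of the argument — and the main obstacle — is handling the $\mathrm{odd}_{\gamma'}(G-X)$ correction, which is what distinguishes this from the non-tight bound~\eqref{eqn:notTight}. I would argue that for each component $H$ of $G-X$ whose vertex-set is $\gamma'$-odd, the circuits, taken together, cannot simultaneously ``use up'' all of $\delta(H)$ as crossings and all the non-zero edges inside $H$ toward distinct circuits: a parity constraint forces at least one ``wasted'' crossing or non-zero edge per $\gamma'$-odd component. Concretely, the total weight the circuits accumulate inside $H$ is $\gamma(E(H))$, and the total number of crossings at $H$ is $|\delta(H)|$; the $\gamma'$-odd condition says $\gamma(E(H)\cup\delta(H))$ and $|\delta(H)|/2$ have different parities, which I expect to translate into the statement that the circuits passing through $H$ cannot be matched up efficiently against the cut edges of $\delta(H)$, costing one unit. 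Summing the per-circuit lower bound on ``cost'' over all of $\mathcal{C}$, and then the per-component parity loss over all $\gamma'$-odd components of $G-X$, should yield $|\mathcal{C}| \le \gamma'(E(X)) + \frac{1}{2}|\delta(X)| - \mathrm{odd}_{\gamma'}(G-X)$.

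The bookkeeping of which resource each circuit consumes — a non-zero edge of $E(X)$ versus a pair of cut edges of $\delta(X)$ — must be arranged so that no resource is double-counted across circuits, which is automatic because $\mathcal{C}$ is a decomposition (every edge lies in exactly one circuit). I anticipate the routine but careful part is the parity ledger establishing the ``$-1$ per $\gamma'$-odd component'' loss; I would isolate this as the main claim, prove it by a parity count on each component $H$ of $G-X$ separately (comparing $\gamma(E(H)\cup\delta(H))$ to the number of crossing edges $|\delta(H)|$ the circuits use at $H$), and then assemble the global inequality by summing over circuits and components.
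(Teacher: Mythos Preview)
Your plan is correct and is essentially the paper's proof. The paper makes the bookkeeping precise by splitting every circuit into a collection $\mathcal{T}$ consisting of one single-edge trail per edge of $E(X)$ together with the ``excursion'' subtrails whose first and last edges lie in $\delta(X)$ (and whose interior therefore lies in a single component of $G-X$); then $|\mathcal{T}|=|E(X)|+\tfrac{1}{2}|\delta(X)|$, exactly $\gamma'(E(X))$ of the single-edge trails are non-zero, each $\gamma'$-odd component forces at least one zero excursion by precisely your parity argument, and every circuit in $\mathcal{C}$ contributes at least one non-zero member of $\mathcal{T}$, which yields the bound directly.
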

\begin{proof}
Since shifting does not change the flooding number, we may assume that $\gamma'=\gamma$. Now let $\C$ be a circuit-decomposition of size $\tilde{\nu}(G, \gamma, b)$ so that each circuit in $\C$ is non-zero and hits $b$. We may assume that each circuit in $\C$ has $b$ as its tail and head by cyclically re-ordering it. 

We now ``split up'' the circuits in $\C$ into a collection $\mathcal{T}$ of edge-disjoint trails. First add a trail to~$\mathcal{T}$ for each edge in $E(X)$, oriented the same way as in $\C$. Then add each trail $T$ which satisfies the following conditions to~$\mathcal{T}$:\begin{enumerate}
    \item $T$ is a subtrail of a circuit in $\C$,
    \item the tail and head of $T$ are in $X$, and
    \item the first and last edges of $T$ are in $\delta(X)$, and no other edges of $T$ are.
\end{enumerate}
\noindent Observe that $\mathcal{T}$ has size $|E(X)|+|\delta(X)|/2$, and that the edge-sets of the trails in $\mathcal{T}$ partition $E(G)$. We now show that the number of non-zero trails in $\mathcal{T}$ is at most the right-hand side of inequality~(\ref{inequality}); this will complete the proof of Lemma~\ref{lem:easyDirection} since each circuit in $\C$ contributes at least one non-zero trail to $\mathcal{T}$.

It is enough to show that, for each component of $G-X$ whose vertex-set $Y$ is $\gamma$-odd, there exists a trail in $\mathcal{T}$ which has weight zero and hits a vertex in $Y$. There are $|\delta(Y)|/2$ trails in $\mathcal{T}$ which hit a vertex in $Y$. Moreover, the sum of their weights (in $\mathbb{Z}_2$) is equal to the parity of $\gamma(E(Y)\cup \delta(Y))$. So at least one of these trails has weight zero since, by the definition of odd components, the parity of $\gamma(E(Y)\cup \delta(Y))$ is different from the parity of $|\delta(Y)|/2$. 
\end{proof}

\subsection*{Proof outline} We now outline the proof of Theorem~\ref{thm:mainFlooding}. 

Let $(G, \gamma, b)$ be an \rooted{}. A \emph{certificate} for $(G, \gamma, b)$ is a tuple $(X, \gamma')$ so that $\gamma'$ is a shifting of $\gamma$, $X$ is a set of vertices that contains $b$, and inequality~(\ref{inequality}) from Lemma~\ref{lem:easyDirection} is tight. Thus, to prove Theorem~\ref{thm:mainFlooding}, we just need to show that every \rooted{} has a certificate. 

The key observation is that we can allow circuit-decompositions to contain zero circuits. Formally, a \emph{flooding} is a circuit-decomposition of size $\deg(b)/2$ where each circuit has $b$ as its tail and head. (This assumption is simply more convenient than saying that $b$ is hit.) A flooding is \emph{optimal} if it contains the maximum number of non-zero circuits in any flooding of $(G, \gamma, b)$. This gives an alternate definition of the flooding number as follows.

\begin{lemma}
\label{lem:floodingEq}
For any \rooted{} $(G, \gamma, b)$, the maximum number of non-zero circuits in a flooding is equal to $\tilde{\nu}(G, \gamma, b)$.
\end{lemma}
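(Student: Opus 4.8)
The plan is to prove the two inequalities separately, using two elementary operations on circuits that have $b$ as both their tail and head; call such a circuit \emph{$b$-rooted}. The first operation is \emph{splitting}: if $C$ is a $b$-rooted circuit which passes through $b$ a total of $m$ times (counting its common tail/head once), then cutting $C$ at each visit to $b$ expresses it as a concatenation of $m$ shorter $b$-rooted circuits $C_1, \dots, C_m$, whose edge-sets partition that of $C$ and whose weights satisfy $\gamma(C_1) + \cdots + \gamma(C_m) = \gamma(C)$ in $\mathbb{Z}_2$. The second operation is \emph{merging}: given two $b$-rooted circuits with disjoint edge-sets, traversing one and then the other yields a single $b$-rooted circuit whose weight is the sum of the two weights. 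Both operations manifestly preserve the property of being a circuit-decomposition of all of $E(G)$.

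For the inequality $\tilde{\nu}(G, \gamma, b) \leq$ (maximum number of non-zero circuits in a flooding), I would start from a circuit-decomposition $\C$ of size $\tilde{\nu}(G, \gamma, b)$ in which every circuit is non-zero and hits $b$; after cyclic re-ordering I may assume each circuit of $\C$ is $b$-rooted. Applying the splitting operation to every circuit of $\C$ produces a new circuit-decomposition in which every circuit is $b$-rooted. The key counting point is that its size is exactly $\deg(b)/2$: each transit through $b$ consumes two of the $\deg(b)$ half-edges at $b$, and every half-edge at $b$ is used exactly once, so the number of $b$-rooted circuits produced equals $\deg(b)/2$. Hence the result is a flooding. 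Moreover, since the weights of the pieces of each original circuit sum to its weight, every non-zero circuit of $\C$ contributes at least one non-zero piece, so the flooding has at least $\tilde{\nu}(G, \gamma, b)$ non-zero circuits.

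For the reverse inequality, I would start from a flooding with the maximum number $k$ of non-zero circuits. If $k = 0$ the bound $k \leq \tilde{\nu}(G, \gamma, b)$ is immediate, so I may assume $k \geq 1$ and fix one non-zero circuit. I then repeatedly merge each zero circuit of the flooding into a non-zero one; each merge produces a $b$-rooted circuit of weight $0 + 1 = 1$, which is again non-zero, so the number of non-zero circuits is unchanged while the number of zero circuits strictly decreases. Since every merge preserves the property of being a circuit-decomposition of all of $E(G)$, this process terminates in a circuit-decomposition consisting of exactly $k$ circuits, each non-zero and hitting $b$. Therefore $\tilde{\nu}(G, \gamma, b) \geq k$, which completes the proof.

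The only genuine subtlety, and the step I would be most careful about, is the counting argument showing that splitting an optimal decomposition yields \emph{exactly} $\deg(b)/2$ circuits, so that the result is genuinely a flooding rather than merely a $b$-rooted circuit-decomposition of smaller size; this is where the half-edge bookkeeping matters, together with the convention that a loop at $b$ contributes two half-edges to $\deg(b)$. Everything else is a direct, if careful, manipulation of trails.
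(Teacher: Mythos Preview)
Your proposal is correct and takes essentially the same approach as the paper: split at $b$ to go from an optimal decomposition to a flooding, and merge zero circuits into a non-zero one to go back. Your write-up is in fact more careful than the paper's, which leaves the half-edge counting (showing the split collection has size exactly $\deg(b)/2$) implicit and simply remarks parenthetically that each non-zero circuit yields an odd number of non-zero pieces.
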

\begin{proof}
First consider a circuit-decomposition $\C$ of size $\tilde{\nu}(G, \gamma, b)$ so that each circuit in $\C$ is non-zero and hits $b$. By ``splitting up the circuits in $\C$ at $b$'', we can find a flooding that contains at least $\tilde{\nu}(G, \gamma, b)$-many non-zero circuits. (Each non-zero circuit yields an odd number of non-zero circuits after ``splitting''.) 

In the other direction, consider an optimal flooding $\C$. All of the zero circuits in $\C$ can be ``combined'' with a non-zero circuit in $\C$ to obtain a circuit-decomposition where each circuit is non-zero and hits $b$. (We may assume that $\C$ contains a non-zero circuit since otherwise this direction holds trivially.)
\end{proof}
\noindent We work with this alternate definition of the flooding number from now on.

Informally, our approach to Theorem~\ref{thm:mainFlooding} is to consider why the zero circuits in an optimal flooding cannot be ``turned into'' non-zero circuits. Every optimal flooding contains exactly $\left(\deg(b)/2-\tilde{\nu}(G, \gamma, b)\right)$-many zero circuits. In Section~\ref{sec:matroid} we define a matroid of this rank. The bases of this matroid are obtained by selecting a ``representative'' for each zero circuit in an optimal flooding. Roughly, a representative is specified by 1) a ``split'' of the circuit into two subtrails, one with $b$ as its tail and one with $b$ as its head, and 2) the weight of those two subtrails (they must have the same weight since the circuit has weight zero). 

Then in Section~\ref{sec:reduction} we prove Theorem~\ref{thm:mainFlooding} by reducing to the case that this matroid has rank~$1$. Thus $\tilde{\nu}(G, \gamma, b)=\deg(b)/2-1$, and ``parity'' already shows that $(\{b\}, \gamma)$ is a certificate. We discuss this reduction in a little more detail in the next section, after defining the matroid.

For this approach, it is convenient to have some notation about how to ``combine'' and ``split up'' trails. If $T_1$ and $T_2$ are edge-disjoint trails so that the head of $T_1$ is the tail of $T_2$, then we can compose them into a new trail denoted $(T_1,T_2)$. If $\gamma$ is a signature, then we denote the weight of $(T_1, T_2)$ by $\gamma(T_1, T_2)$ for short. Likewise, we can reverse a trail $T$ to obtain another trail denoted $T^{-1}$. We also use this notation if $f$ is an arc; so $f^{-1}$ is the arc with the same edge, but in the reverse direction. As an example of this notation, we have $(T_1,T_2)^{-1} = (T_2^{-1},T_1^{-1})$. 

We use transitions to ``split up'' trails; a \emph{transition} is a set of two half-edges which are incident to the same vertex. If that vertex is $v$, we say the transition is \emph{at $v$}. The \emph{transitions of a trail} $T$ are the transitions $\{h_1, h_2\}$ so that $T$ has two consecutive arcs of the form $(h_1', h_1)$ and $(h_2, h_2')$. So a trail with $\ell$ arcs is fully determined by its first arc and its $\ell-1$ transitions. The \emph{transitions of a flooding} are the transitions of its circuits. Finally, if $R_1, \ldots, R_k$ are distinct transitions of a trail $T$, then there are unique non-empty trails $T_1, \ldots, T_{k+1}$ so that $T = (T_1, \ldots, T_{k+1})$ and none of $R_1, \ldots, R_k$ are transitions of any of $T_1, \ldots, T_{k+1}$. We say that $(T_1, \ldots, T_{k+1})$ is the \emph{split of $T$ specified by $R_1, \ldots, R_k$}. If $\mathcal{T}$ is a collection of edge-disjoint trails and $R_1, \ldots, R_k$ are transitions of a single trail $T \in \mathcal{T}$, then we also call $(T_1, \ldots, T_{k+1})$ the \emph{split of $\mathcal{T}$ specified by $R_1, \ldots, R_k$}.

\section{The flooding matroid}
\label{sec:matroid}

Let $(G, \gamma, b)$ be an \rooted{}, and let $C$ be a zero circuit which has $b$ as its tail and head. A \emph{representative for $C$} is a tuple $(f, \alpha)$ so that $f$ is an arc of $C$ and $\alpha \in \{0,1\}$ is the weight of the subtrail of $C$ which is obtained by deleting all arcs after $f$; see Figure~\ref{fig:representative}. A \emph{system of representatives} for a flooding $\C$ is a set $B$ that consists of one representative for each zero circuit in $\C$. 

We define the \emph{flooding matroid} $M(G, \gamma, b)$ by its ground set and its bases. While we call it a matroid right away, it will take us the rest of this section to prove that it is in fact a matroid. So for now it should just be viewed as a combinatorial object which consists of a ground set and some collection of subsets of that ground set called ``bases''. Here is the definition. The ground set of $M(G, \gamma, b)$ is the set of all tuples $(f, \alpha)$ so that $f$ is an arc of $(G, \gamma, b)$ and $\alpha \in \{0,1\}$. A set $B$ is a basis of $M(G, \gamma, b)$ if it is a system of representatives for an optimal flooding. (If the flooding number is equal to $\deg(b)/2$, then we view the empty set as a system of representatives for an optimal flooding; this guarantees that $M(G, \gamma, b)$ always has a basis.)

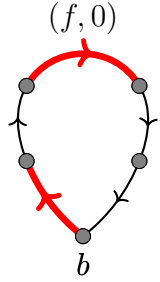
\begin{figure}
\centering
\begin{tikzpicture}[use Hobby shortcut, scale=1, every node/.style={MyNode}]
    \def \w {2}
    \node[label=below:$b$] (a) at (0,0) {};
    \node (A1) at (-.75,1) {};
    \node (A2) at (-.75,2) {};
    \node (A3) at (.75,2) {};
    \node (A4) at (.75,1) {};
    \draw[thick, arrBlack] (a.center) .. ([blank]A1.center) .. (A2.center) .. ([blank]A3.center) .. (A4.center) .. (a.center);
    \draw[MyRedArc, arrRed] (a.center) .. (A1.center) .. ([blank]A2.center) .. (A3.center) .. ([blank]A4.center) .. ([blank]a.center);
    \foreach \i in {1,...,4}{
        \node at (A\i) {};
    }
    \node[label=below:$b$] (a) at (0,0) {};
    \node[rectangle, draw=none, fill=none] at (0,2.9) {$(f, 0)$};
\end{tikzpicture}
\caption{A circuit which is represented by $(f,0)$, where $f$ is the third arc of the circuit.}
\label{fig:representative}
\end{figure}

Recall that, to prove Theorem~\ref{thm:mainFlooding}, we will reduce to the case that $M(G, \gamma, b)$ has rank~$1$. The key step is show that if $(G, \gamma, b)$ is a counterexample to Theorem~\ref{thm:mainFlooding} which is, in a certain sense, ``minimal'', then for each arc $f$ of $G-b$, both $(f,0)$ and $(f,1)$ are non-loop elements of $M(G, \gamma, b)$. Then we will use the transitivity of parallel pairs and the following key lemma. The proof of the lemma does not use the fact that $M(G, \gamma, b)$ is a matroid, just the definition of its bases.

\begin{lemma}
\label{lem:transitivity}
If $(G, \gamma, b)$ is an \rooted{} and $f_0$ and $f_1$ are arcs with the same head, then there is no basis of $M(G, \gamma, b)$ which contains both $(f_0, 0)$ and $(f_1, 1)$.
\end{lemma}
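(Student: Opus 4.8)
The plan is to argue by contradiction with a direct exchange (``swapping'') argument that increases the number of non-zero circuits in an optimal flooding. Suppose some basis $B$ of $M(G, \gamma, b)$ contains both $(f_0, 0)$ and $(f_1, 1)$, and let $v$ be the common head of $f_0$ and $f_1$. By the definition of a basis, $B$ is a system of representatives for an optimal flooding $\C$. Since a system of representatives contains exactly one representative per zero circuit and the two listed tuples are distinct, they represent two \emph{distinct} zero circuits $C_0$ and $C_1$ of $\C$, with $f_i$ an arc of $C_i$ for $i \in \{0,1\}$.

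First I would split each circuit at its distinguished arc. Write $C_0 = (P_0, Q_0)$, where $P_0$ is the subtrail obtained by deleting all arcs of $C_0$ after $f_0$ and $Q_0$ is the remaining suffix; since the head of $f_0$ is $v$, the trail $P_0$ has tail $b$ and head $v$, while $Q_0$ has tail $v$ and head $b$. Define $P_1, Q_1$ from $C_1$ and $f_1$ analogously. By the definition of a representative, $\gamma(P_0) = 0$ and $\gamma(P_1) = 1$; and because $C_0$ and $C_1$ are both zero circuits, $\gamma(Q_0) = \gamma(P_0) = 0$ and $\gamma(Q_1) = \gamma(P_1) = 1$ in $\mathbb{Z}_2$.

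The heart of the argument is to recombine these four subtrails into two new circuits covering exactly the edges of $C_0 \cup C_1$. Since $P_0$ and $Q_1$ are edge-disjoint (being subtrails of the edge-disjoint circuits $C_0, C_1$) and the head of $P_0$ equals the tail of $Q_1$, namely $v$, the composition $(P_0, Q_1)$ is a circuit with tail and head $b$; likewise $(P_1, Q_0)$ is such a circuit. Their weights are $\gamma(P_0) + \gamma(Q_1) = 1$ and $\gamma(P_1) + \gamma(Q_0) = 1$, so both are \emph{non-zero}. Moreover $\{P_0, Q_1\}$ and $\{P_1, Q_0\}$ partition the edges of $C_0 \cup C_1$, so replacing $C_0$ and $C_1$ in $\C$ by $(P_0, Q_1)$ and $(P_1, Q_0)$ yields a flooding of the same size but with two additional non-zero circuits. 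This contradicts the optimality of $\C$ and proves the lemma.

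The step I expect to need the most care is verifying that the recombined collection really is a flooding: that $(P_0, Q_1)$ and $(P_1, Q_0)$ are genuine circuits (each a trail with tail and head $b$), that together they are edge-disjoint and cover $E(C_0) \cup E(C_1)$, and the handling of the degenerate cases where a suffix $Q_i$ is empty. The latter occurs exactly when $f_i$ is the last arc of $C_i$, which forces $v = b$; in that case the relevant composition still has $b$ as both tail and head and the weight computation is unchanged, so the argument persists, but these boundary cases should be checked explicitly. The essential use of the hypothesis that $f_0$ and $f_1$ share a head is precisely that it makes the two ``cross'' compositions at the common vertex $v$ well-defined, which is what lets the weights $0$ and $1$ be redistributed into two non-zero circuits.
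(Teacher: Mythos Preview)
Your argument is correct and follows essentially the same approach as the paper's proof: split the two zero circuits at the heads of $f_0$ and $f_1$, then recombine the four subtrails into two non-zero circuits, contradicting optimality. The only cosmetic difference is the choice of recombination: the paper pairs the two prefixes (reversing one) and the two suffixes (reversing one), forming $(P_0, P_1^{-1})$ and $(Q_0^{-1}, Q_1)$, whereas you cross-pair prefix with suffix to get $(P_0, Q_1)$ and $(P_1, Q_0)$; both yield two circuits of weight~$1$. One small remark on your degenerate-case discussion: $Q_1$ can never be empty, since that would force $\gamma(P_1)=\gamma(C_1)=0$, contradicting $\gamma(P_1)=1$; only $Q_0$ can be empty, and in that case your weight computation $\gamma(Q_0)=0$ is indeed unchanged.
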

\begin{proof}
Suppose to the contrary that there is such a basis. Then there exists an optimal flooding $\C$ that contains distinct zero circuits with $(f_0,0)$ and $(f_1,1)$ as their respective representatives. Thus there are trails $T_0, S_0, T_1, S_1$ so that $(T_0, S_0)$ and $(T_1, S_1)$ are distinct zero circuits in $\C$, the trail $T_0$ has weight~$0$, the trail $T_1$ has weight~$1$, and $T_0$ and $T_1$ have the same head. We can obtain another flooding $\C'$ from $\C$ by replacing $(T_0, S_0)$ and $(T_1, S_1)$ with the circuits $(T_0, T_1^{-1})$ and $(S_0^{-1}, S_1)$. However this contradicts the optimality of $\C$ as the two new circuits are both non-zero.
\end{proof}

Let $(G, \gamma, b)$ be an \rooted{}. The rest of this section is dedicated to proving that $M(G, \gamma, b)$ is a matroid. To do so, we will prove that the basis exchange axiom holds in Lemma~\ref{lem:basisExchange}. The proof reduces to the $4$-edge-connected case; $(G, \gamma, b)$ is \textit{$4$-edge-connected} if there is no set $Y \subseteq V(G)-\{b\}$ so that $|\delta(Y)|=2$. Then we use the following key lemma to find a transition which works for two different bases; a transition $R$ \textit{works for} a basis $B$ of $M(G, \gamma, b)$ if there exists an optimal flooding which has $R$ as a transition and $B$ as a system of representatives.

\begin{lemma}
\label{lem:main}
Let $(G, \gamma, b)$ be a $4$-edge-connected \rooted{}, let $v\neq b$ be a vertex which is joined to $b$ by an edge $e$, and let $h$ be the half-edge of $e$ which is at $v$. Then for any basis $B$ of $M(G, \gamma, b)$, more than half of the transitions at $v$ which contain $h$ work for $B$.
\end{lemma}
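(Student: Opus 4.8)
The plan is to start from an optimal flooding $\C$ for which $B$ is a system of representatives and which realizes the given transition at $v$; by Lemma~\ref{lem:floodingEq} such a flooding exists. Since $e$ is incident to $b$, I can cyclically re-order the circuit $C_e \in \C$ containing $e$ so that its first arc $f_e$ traverses $e$ from $b$ to $v$, ending at the half-edge $h$; thus $C_e = (f_e, Q)$, where the return trail $Q$ begins at the half-edge $h_0$ paired with $h$ by $\C$. Write $\deg(v) = 2d$, so that $d \geq 2$ by $4$-edge-connectivity (as $\delta(\{v\})$ cannot have size $2$). The $d$ transitions of $\C$ at $v$ are $\{h, h_0\}$ together with $d-1$ further transitions $\{x_1, y_1\}, \dots, \{x_{d-1}, y_{d-1}\}$, and the $2d-1$ half-edges $h_0, x_1, y_1, \dots, x_{d-1}, y_{d-1}$ are exactly the possible partners of $h$. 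The transition $\{h, h_0\}$ works for $B$ by the choice of $\C$, so it suffices to prove the following \emph{local claim}: for each $i$, at least one of $\{h, x_i\}$, $\{h, y_i\}$ works for $B$. This yields at least $1 + (d-1) = d$ working transitions out of $2d-1$, which is more than half --- and is precisely the bound needed so that the working transitions for two different bases are forced to intersect.

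To prove the local claim for a transition $R = \{x,y\}$ with $C_R \neq C_e$, split $C_R$ at $R$ as $C_R = (P_R, Q_R)$, entering $v$ through $x$ (the end of $P_R$) and leaving through $y$ (the start of $Q_R$), and write $C_e = (P, Q)$ with $P = f_e$. Keeping all other transitions of $\C$ fixed, there are exactly two reroutings that change the partner of $h$: one realizing $\{h, y\}$ replaces $C_e, C_R$ by $(P, Q_R)$ and $(P_R, Q)$, and one realizing $\{h, x\}$ replaces them by the circuits on edge-sets $P \cup P_R$ and $Q \cup Q_R$. Either way the two new circuits have the same total weight as $C_e, C_R$, so a flooding of the same size results. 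The crucial observation is that a flooding circuit may be traversed in \emph{either} direction with $b$ as tail and head; so for each zero circuit I may choose the traversal in which its representative arc points forward, and then I need only check that the weight of the initial segment up to that arc is unchanged.

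I would then run the $\mathbb{Z}_2$-case analysis on $\gamma(C_e), \gamma(C_R)$, using optimality of $\C$ to supply the parity identities. If both are non-zero, one rerouting keeps both new circuits non-zero, leaving the zero circuits (and hence $B$) untouched. If exactly one is zero, one rerouting again yields exactly one zero circuit, and --- using that $P = f_e$ is the single edge $e$ to $b$ --- the old representative arc sits on the new zero circuit with the correct partial weight. If both are zero, optimality forces $\gamma(P) = \gamma(P_R)$ (otherwise a rerouting would produce two non-zero circuits, contradicting optimality), whence $P, Q, P_R, Q_R$ all have equal weight; exactly one rerouting places the two representative arcs on \emph{different} new zero circuits, and equal weight makes their partial weights match. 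In every subcase the surviving rerouting realizes one of $\{h,x\}, \{h,y\}$, proving the local claim when $C_R \neq C_e$.

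The main obstacle is the \emph{same-circuit} case $C_R = C_e$. Then $R$ is a second transition of $C_e$, and only the rerouting realizing $\{h,x\}$ (reversing the interior segment $B$ of $C_e$ between the two transitions) keeps the circuit through $b$; realizing $\{h,y\}$ directly would split off a circuit of $C_e$ avoiding $b$. When $C_e$ is zero and its representative arc lies on the reversed segment $B$, the local move reverses that arc and the partial-weight condition degenerates to $\gamma(B) = 0$, which is not automatic. I expect the real work to lie here, and this is exactly where the added hypothesis on $h$ is essential: the plan is to reroute \emph{globally}, splicing $B$ into another circuit through $v$ so that $\{h,y\}$ can be realized without creating a $b$-avoiding circuit, and to use $4$-edge-connectivity (which forbids $2$-edge-cuts in $G-b$, guaranteeing that $v$ has several other transitions and room to reattach $B$) together with optimality to exclude the configurations in which neither $\{h,x\}$ nor $\{h,y\}$ can be made to work. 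Checking that this global reroute simultaneously preserves optimality and the full system of representatives $B$ is the technical heart of the lemma.
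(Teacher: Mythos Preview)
Your plan is the paper's plan: reduce to the local claim that for every other transition $\{r,r'\}$ of $\C$ at $v$ one of $\{h,r\},\{h,r'\}$ works for $B$; dispose of the different-circuit case by the $\mathbb{Z}_2$ partition analysis you sketch (the paper does exactly this, allowing each new circuit to be reversed so the representative arc points forward); and in the same-circuit case, first try the local reversal $(T_1,L^{-1},T_2)$ and its traversal-reversal $(T_2^{-1},L,T_1^{-1})$, deduce in the failing subcase that the representative lies in $L$ with $\gamma(L)\neq 0$ (equivalently $\gamma(T_1)\neq\gamma(T_2)$), and then detach $L$ and reattach it globally. Two cosmetic points: the existence of $\C$ is by the definition of a basis, not Lemma~\ref{lem:floodingEq}; and your normalisation ``$P=f_e$ is the first arc'' may conflict with your other normalisation that the representative arc of $C_e$ points forward, so it is cleaner to drop it (the paper does not use it, and your Case~2.2 argument does not actually need it either).

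The one step that would fail as written is the global reroute. You propose to splice the detached segment ``into another circuit through $v$'', justified by ``$4$-edge-connectivity \ldots\ guaranteeing that $v$ has several other transitions''. But when $\deg(v)=4$, all four half-edges at $v$ belong to $(T_1,L,T_2)$, so after removing $L$ the \emph{only} transition at $v$ in $(\C\setminus\{C_e\})\cup\{(T_1,T_2)\}$ is $R=\{h,r'\}$ itself, and there is nowhere at $v$ to reattach. The correct use of $4$-edge-connectivity is on the whole vertex set $V(L)$: since $b\notin V(L)$ we have $|\delta(V(L))|\geq 4$, and every edge of $L$ lies inside $V(L)$, so the remaining circuits cross $\delta(V(L))$ at least four times and therefore have at least two transitions at vertices of $V(L)$. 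One of these is $R$; any other one, say $R'$ at a vertex $u$ (possibly $u=v$, possibly on $(T_1,T_2)$ itself), is where you reattach a cyclic shift $L'$ of $L$. Because $\gamma(T_1)\neq\gamma(T_2)$, removing $L$ gained a non-zero circuit, so the new flooding is optimal; its host circuit $(S_1,L',S_2)$ is zero and carries the old representative arc inside $L'$, and one of its two traversals makes $B$ a system of representatives. That is the ``technical heart'' you flagged, and it is short once the reattachment point is located correctly.
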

\begin{proof}
Say that a transition is \emph{valid} if it is a transition at $v$ that contains $h$. So we are trying to show that more than half of the valid transitions work for~$B$.

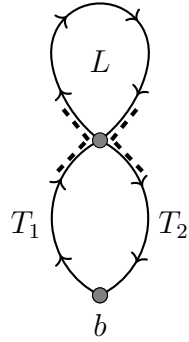
\begin{figure}
\centering
\begin{tikzpicture}[use Hobby shortcut, scale=.75]
    \def \w {1.95}
    \node (v) at (0,0) {};
    \node (midL) at (-.85,-1.5) {};
    \node (midR) at (.85,-1.5) {};
    \node[label=left:$T_1$] at (-.65,-1.5) {};
    \node[label=right:$T_2$] at (.65,-1.5) {};
    \node[label=below:$b$, MyNode] (a) at (0,-2.75) {};
    \node (A1) at (-.75,1) {};
    \node (A2) at (-.75,2) {};
    \node (A3) at (.75,2) {};
    \node (A4) at (.75,1) {};
    \draw[ultra thick, dashed] (-.75, -.55) -- (-.23, 0);
    \draw[ultra thick, dashed] (.75, -.55) -- (.23, 0);
    \draw[ultra thick, dashed] (-.65,.55) -- (-.2, 0);
    \draw[ultra thick, dashed] (.65,.55) -- (.2, 0);
    \draw[thick, arrLoop] (v.center) .. (A1.center) .. (A2.center) .. (A3.center) .. (A4.center) .. (v.center);
    \draw[thick, arrLoopBot] (a.center) .. (midL.center) .. (v.center);
    \draw[thick, arrLoopBotReverse] (a.center) .. (midR.center) .. (v.center);
    \foreach \i in {1,...,4}{
        \node at (A\i) {};
    }
    \node[MyNode] at (v) {};
    \node[MyNode] at (a) {};
    \node[rectangle] at (0,1.4) {$L$};
\end{tikzpicture}
\caption{The transitions $\{r, r'\}$ and $\{h,h'\}$ and the split $(T_1, L, T_2)$ from \textit{Case~1} of Lemma~\ref{lem:main}. Throughout the paper, transitions are depicted as thick dashed curves.}
\label{fig:mainLemma1}
\end{figure}

Fix an optimal flooding $\C$ which has $B$ as a system of representatives. There is a unique half-edge $h'$ so that $\{h,h'\}$ is a transition of $\C$. So $\{h,h'\}$ works for $B$. Furthermore, there are exactly $\deg(v)-1$ valid transitions, and $\deg(v)-1$ is odd. So it suffices to show that half of the other valid transitions also work for $B$. We will do this by proving that for each transition $\{r, r'\} \neq \{h,h'\}$ of $\C$ at $v$, either $\{h, r\}$ or $\{h, r'\}$ works for $B$. (Note that the transitions of $\C$ at $v$ are pairwise disjoint.) We break into two cases.

\smallskip
\noindent\emph{Case 1:} $\{h,h'\}$ and $\{r, r'\}$ are transitions of the same circuit in $\C$.
\smallskip

Let $(T_1, L, T_2)$ be the split of $\C$ specified by $\{h,h'\}$ and $\{r, r'\}$; see Figure~\ref{fig:mainLemma1}. Since $\C$ is a flooding and the edge containing $h$ is incident to $b$, it follows that $h$ is in an arc of $T_1$ or $T_2$ (and not in an arc of $L$). We can obtain a new flooding from $\C$ by replacing $(T_1, L, T_2)$ with either $(T_1, L^{-1}, T_2)$ or its reversal $(T_1, L^{-1}, T_2)^{-1}$. If $B$ is a system of representatives for either of these floodings, then we are done; so we may assume otherwise. It follows that $(T_1, L, T_2)$ is a zero circuit, since basis elements are only taken from zero circuits, and changing the order of arcs in a circuit does not affect its weight. Moreover, the arc of the representative of $(T_1, L, T_2)$ is in $L$, for otherwise the subtrail leading up to that arc in $(T_1, L^{-1}, T_2)$ has the same weight as in $(T_1, L, T_2)$. Finally, note that $\gamma(T_1) \neq \gamma(T_2)$, for otherwise $B$ is a system of representatives for the flooding obtained by replacing $(T_1, L, T_2)$ with $(T_1, L^{-1}, T_2)^{-1}=(T_2^{-2}, L, T_1^{-1})$.

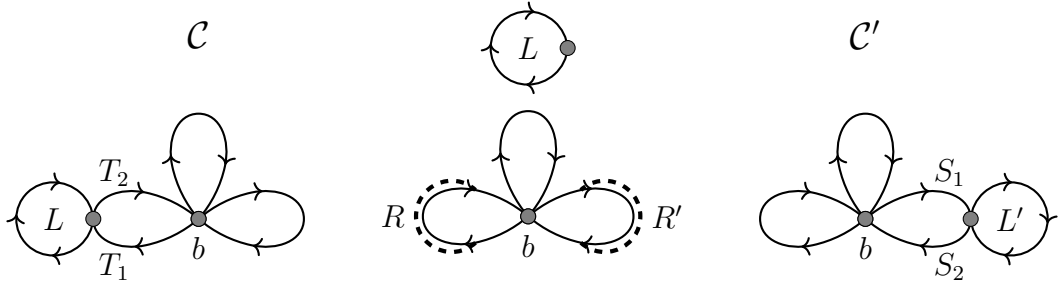
\begin{figure}
\centering
\begin{tikzpicture}[use Hobby shortcut, scale=.7]
    \node (v) at (-2,0) {};
    \node (v1) at (0, 2) {};
    \node (v2) at (2, 0) {};
    \node (midB) at (-1, -.5) {};
    \node (midT) at (-1, .5) {};
    \node (midL) at (-.5, 1) {};
    \node (midR) at (.5, 1) {};
    \node (midB2) at (1, -.5) {};
    \node (midT2) at (1, .5) {};
    \node[label=below:$T_1$] at (-1.6, -.25) {};
    \node[label=above:$T_2$] at (-1.6, .21) {};
    \node[label=below:$b$, MyNode] (a) at (0,0) {};
    \node (A1) at (-2.25, -.5) {};
    \node (A2) at (-3.25, -.5) {};
    \node (A3) at (-3.25, .5) {};
    \node (A4) at (-2.25, .5) {};
    \draw[thick, arrLoopSmaller] (v.center) .. (A1.center) .. (A2.center) .. (A3.center) .. (A4.center) .. (v.center);
    \draw[thick, arrLoopMid] (a.center) .. (midB.center) .. (v.center) .. (midT.center) .. (a.center);
    \draw[thick, arrLoopMid] (a.center) .. (midL.center) .. (v1.center) .. (midR.center) .. (a.center);
    \draw[thick, arrLoopMid] (a.center) .. (midT2.center) .. (v2.center) .. (midB2.center) .. (a.center);
    \node[MyNode] at (a) {};
    \node[MyNode] at (v) {};
    \node[rectangle] at (0, 3.8) {};
    \node[rectangle] at (0, 3.5) {\large $\C$};
    \node[rectangle] at (-2.75,0) {$L$};
\end{tikzpicture}\hskip .75cm
\begin{tikzpicture}[use Hobby shortcut, scale=.7]
    \node (v) at (-2,0) {};
    \node (v1) at (0, 2) {};
    \node (v2) at (2, 0) {};
    \node (midB) at (-1, -.5) {};
    \node (midT) at (-1, .5) {};
    \node (midL) at (-.5, 1) {};
    \node (midR) at (.5, 1) {};
    \node (midB2) at (1, -.5) {};
    \node (midT2) at (1, .5) {};
    \node[label=below:{\textcolor{white}{$T_1$}}] at (-1.4, -.3) {};
    \node[label=below:$b$, MyNode] (a) at (0,0) {};
    \node (A1') at (.5, 2.7) {};
    \node (A2') at (-.5, 2.7) {};
    \node (A3') at (-.5, 3.7) {};
    \node (A4') at (.5, 3.7) {};
    \node (v') at (.75, 3.2) {};
    \node (vT) at (-2.13,0) {};
    \node (v2T) at (2.13, 0) {};
    \node[label=left:$R$] at (-1.93 ,0) {};
    \node[label=right:$R'$] at (1.93 ,0) {};
    \draw[ultra thick, dashed] (midB.center) .. (vT.center) .. (midT.center);
    \draw[ultra thick, dashed] (midB2.center) .. (v2T.center) .. (midT2.center);
    \draw[thick, arrLoopSmaller] (v'.center) .. (A1'.center) .. (A2'.center) .. (A3'.center) .. (A4'.center) .. (v'.center);
    \draw[thick, arrLoopMid] (a.center) .. (midB.center) .. (v.center) .. (midT.center) .. (a.center);
    \draw[thick, arrLoopMid] (a.center) .. (midL.center) .. (v1.center) .. (midR.center) .. (a.center);
    \draw[thick, arrLoopMid] (a.center) .. (midT2.center) .. (v2.center) .. (midB2.center) .. (a.center);
    \node[MyNode] at (v') {};
    \node[MyNode] at (a) {};
    \node[rectangle] at (0, 3.8) {};
    \node[rectangle] at (0, 3.2) {$L$};
\end{tikzpicture}\hskip .75cm
\begin{tikzpicture}[use Hobby shortcut, scale=.7]
    \node (v) at (-2,0) {};
    \node (v1) at (0, 2) {};
    \node (v2) at (2, 0) {};
    \node (midB) at (-1, -.5) {};
    \node (midT) at (-1, .5) {};
    \node (midL) at (-.5, 1) {};
    \node (midR) at (.5, 1) {};
    \node (midB2) at (1, -.5) {};
    \node (midT2) at (1, .5) {};
    \node[label=below:$S_2$] at (1.6, -.25) {};
    \node[label=above:$S_1$] at (1.6, .21) {};
    \node[label=below:$b$, MyNode] (a) at (0,0) {};
    \node (A1) at (2.25, -.5) {};
    \node (A2) at (3.25, -.5) {};
    \node (A3) at (3.25, .5) {};
    \node (A4) at (2.25, .5) {};
    \draw[thick, arrLoopSmaller] (v2.center) .. (A4.center) .. (A3.center) .. (A2.center) .. (A1.center) .. (v2.center);
    \draw[thick, arrLoopMid] (a.center) .. (midB.center) .. (v.center) .. (midT.center) .. (a.center);
    \draw[thick, arrLoopMid] (a.center) .. (midL.center) .. (v1.center) .. (midR.center) .. (a.center);
    \draw[thick, arrLoopMid] (a.center) .. (midT2.center) .. (v2.center) .. (midB2.center) .. (a.center);
    \node[MyNode] at (a) {};
    \node[MyNode] at (v2) {};
    \node[rectangle] at (0, 3.8) {};
    \node[rectangle] at (0, 3.5) {\large $\C'$};
    \node[rectangle] at (2.75,0) {$L'$};
\end{tikzpicture}
\caption{Finding another place to put $L$ in \emph{Case~1} of Lemma~\ref{lem:main}.}
\label{fig:mainLemma2}
\end{figure}

Next we use $4$-edge-connectivity to ``find another place to put $L$''; this is the only time we will use $4$-edge-connectivity. Refer to Figure~\ref{fig:mainLemma2} for the following definitions. For this paragraph, consider only the transitions of the circuits in $\left(\C-\{(T_1, L, T_2)\}\right)\cup \{(T_1,T_2)\}$. Let $R$ be the transition which specifies the split $(T_1, T_2)$. Note that $R$ contains $h$ since, as we argued earlier, $h$ is in an arc of $T_1$ or $T_2$. Since $(G, \gamma, b)$ is $4$-edge-connected, there exists another transition $R' \neq R$ which is at a vertex $u$ that is hit by $L$. (It is possible that $u=v$.) Let $L'$ be a circuit which begins and ends at $u$ and is obtained by cyclically re-ordering $L$. Let $(S_1,S_2)$ be the split specified by $R'$. (It is possible that $(S_1, S_2)=(T_1, T_2)$.) By ``attaching $L'$ onto $(S_1, S_2)$'', we obtain a flooding $\C'$ so that $R$ is a transition of $\C'$ and $(S_1, L', S_2) \in \C'$.

Notice that when we removed $L$, we gained the non-zero circuit $(T_1,T_2)$. So by the optimality of $\C$, we must have lost a non-zero circuit when we added $L'$ to $(S_1, S_2)$. It follows that $\C'$ is optimal, $(S_1, L', S_2)$ is a zero circuit, and $B$ does not contain an element whose arc is in $(S_1, S_2)$. Thus, since $L$ is a non-zero circuit that contains an arc of an element in $B$, the set $B$ must contain a representative for either $(S_1, L', S_2)$ or $(S_2^{-1}, L', S_1^{-1})$. This completes the first case; we just replace $(S_1, L', S_2)$ by $(S_2^{-1}, L', S_1^{-1})$ in $\C'$ if necessary.

\smallskip
\noindent\emph{Case 2:} $\{h,h'\}$ and $\{r,r'\}$ are transitions of different circuits in~$\C$.
\smallskip

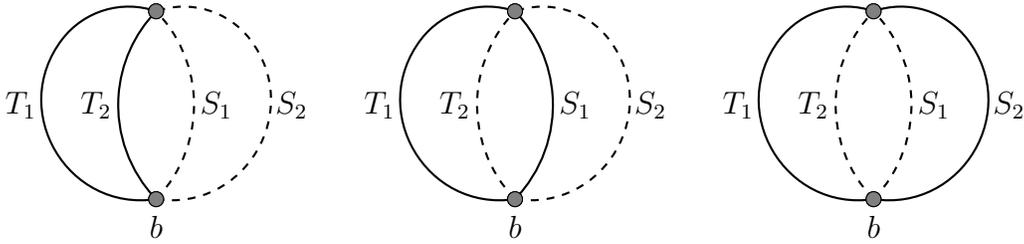
\begin{figure}
\centering
\begin{tikzpicture}[use Hobby shortcut, scale=1, every node/.style={MyNode}]
    \def \h {2.5}
    \node[label=below:$b$] (a) at (0,0) {};
    \node (v) at (0,\h) {};
    \node[draw=none, fill=none] (A1) at (-1.25, .85*\h) {};
    \node[draw=none, fill=none] (A2) at (-.5, .55*\h) {};
    \node[draw=none, fill=none] (A3) at (.5, .55*\h) {};
    \node[draw=none, fill=none] (A4) at (1.25, .85*\h) {};
    \node[draw=none, fill=none] at (-1.8, .5*\h) {$T_1$};
    \node[draw=none, fill=none] at (-.8, .5*\h) {$T_2$};
    \node[draw=none, fill=none] at (.8, .5*\h) {$S_1$};
    \node[draw=none, fill=none] at (1.8, .5*\h) {$S_2$};
    \draw[thick] (a.center) .. (A1.center) .. (v.center);
    \draw[thick] (a.center) .. (A2.center) .. (v.center);
    \draw[thick, dashed] (a.center) .. (A3.center) .. (v.center);
    \draw[thick, dashed] (a.center) .. (A4.center) .. (v.center);
    \node at (a) {};
    \node at (v) {};
\end{tikzpicture}\hskip .43cm
\begin{tikzpicture}[use Hobby shortcut, scale=1, every node/.style={MyNode}]
    \def \h {2.5}
    \node[label=below:$b$] (a) at (0,0) {};
    \node (v) at (0,\h) {};
    \node[draw=none, fill=none] (A1) at (-1.25, .85*\h) {};
    \node[draw=none, fill=none] (A2) at (-.5, .55*\h) {};
    \node[draw=none, fill=none] (A3) at (.5, .55*\h) {};
    \node[draw=none, fill=none] (A4) at (1.25, .85*\h) {};
    \node[draw=none, fill=none] at (-1.8, .5*\h) {$T_1$};
    \node[draw=none, fill=none] at (-.8, .5*\h) {$T_2$};
    \node[draw=none, fill=none] at (.8, .5*\h) {$S_1$};
    \node[draw=none, fill=none] at (1.8, .5*\h) {$S_2$};
    \draw[thick] (a.center) .. (A1.center) .. (v.center);
    \draw[thick, dashed] (a.center) .. (A2.center) .. (v.center);
    \draw[thick] (a.center) .. (A3.center) .. (v.center);
    \draw[thick, dashed] (a.center) .. (A4.center) .. (v.center);
    \node at (a) {};
    \node at (v) {};
\end{tikzpicture}\hskip .43cm
\begin{tikzpicture}[use Hobby shortcut, scale=1, every node/.style={MyNode}]
    \def \h {2.5}
    \node[label=below:$b$] (a) at (0,0) {};
    \node (v) at (0,\h) {};
    \node[draw=none, fill=none] (A1) at (-1.25, .85*\h) {};
    \node[draw=none, fill=none] (A2) at (-.5, .55*\h) {};
    \node[draw=none, fill=none] (A3) at (.5, .55*\h) {};
    \node[draw=none, fill=none] (A4) at (1.25, .85*\h) {};
    \node[draw=none, fill=none] at (-1.8, .5*\h) {$T_1$};
    \node[draw=none, fill=none] at (-.8, .5*\h) {$T_2$};
    \node[draw=none, fill=none] at (.8, .5*\h) {$S_1$};
    \node[draw=none, fill=none] at (1.8, .5*\h) {$S_2$};
    \draw[thick] (a.center) .. (A1.center) .. (v.center);
    \draw[thick, dashed] (a.center) .. (A2.center) .. (v.center);
    \draw[thick, dashed] (a.center) .. (A3.center) .. (v.center);
    \draw[thick] (a.center) .. (A4.center) .. (v.center);
    \node at (a) {};
    \node at (v) {};
\end{tikzpicture}
\caption{The three ways to partition $\{T_1, T_2, S_1, S_2\}$ into two parts of size two, from \emph{Case~2} of Lemma~\ref{lem:main}.}
\label{fig:mainLemma3}
\end{figure}

Let $(T_1, T_2)$ and $(S_1,S_2)$ be the splits of $\C$ specified by $\{h,h'\}$ and $\{r,r'\}$, respectively. There are three ways to partition $\{T_1, T_2, S_1, S_2\}$ into two parts of size two, as depicted in Figure~\ref{fig:mainLemma3}. Each of these three ways yields a unique flooding of $(G, \gamma, b)$, up to reversing the two circuits that contain any of $h,h', r, r'$. We call these two circuits the \emph{new circuits} of the flooding. 

One of these three floodings is our original flooding $\C$; we are interested in the other two floodings, which we denote by $\C_1$ and $\C_2$.  Note that reversing the new circuits of $\C_1$ (respectively $\C_2$) does not affect whether or not $\C_1$ (respectively $\C_2$) is optimal. However, it might affect whether or not $B$ is a system of representatives. We make the choice as follows; if $L$ is a new circuit of $\C_1$ (respectively $\C_2$) so that more elements of $B$ have arcs in $L^{-1}$ than in $L$, then replace $L$ with $L^{-1}$. We claim that, with this choice, there exists $i \in \{1,2\}$ so that $\C_i$ is an optimal flooding and $B$ is a system of representatives for $\C_i$. This will complete the proof of Lemma~\ref{lem:main}. We now break into cases.

\smallskip
\noindent\emph{Case 2.1:} Both $(T_1, T_2)$ and $(S_1, S_2)$ are non-zero circuits.
\smallskip

Then, as a multi-set, $\{\gamma(T_1), \gamma(T_2), \gamma(S_1), \gamma(S_2)\}=\{0,0,1,1\}$. So for some $i \in \{1,2\}$, both of the new circuits of $\C_i$ are non-zero. Then $\C_i$ is an optimal flooding and $B$ is a system of representatives for $\C_i$.

\smallskip
\noindent\emph{Case 2.2:} Exactly one of $(T_1, T_2)$, $(S_1, S_2)$ is a non-zero circuit.
\smallskip

Then, as a multi-set, $\{\gamma(T_1), \gamma(T_2), \gamma(S_1), \gamma(S_2)\}$ is either $\{0,0,0,1\}$ or $\{0,1,1,1\}$. Note that there is only one way to partition $\{0,0,0,1\}$ into two parts of size two, and likewise for $\{0,1,1,1\}$. Thus both $\C_1$ and $\C_2$ are optimal. It just remains to consider whether $B$ is a system of representatives for $\C_1$ or $\C_2$.

Since exactly one of the new circuits has weight zero, there is only one representative to consider. Let $f$ be the arc of the element of $B$ that represents whichever of $(T_1, T_2)$, $(S_1, S_2)$ is a zero circuit. Note that whichever of $T_1, T_2, S_1, S_2$ has different parity from the other three must contribute to the non-zero circuit in all three of the floodings $\C$, $\C_1$, and $\C_2$. So $f$ is an arc of one of the other trails (that is, one of $T_1, T_2, S_1, S_2$ which has the same weight as two of the other trails). There are two ways to pair this trail containing $f$ with another trail of the same weight. It follows that $f$ is in a zero circuit in either $\C_1$ or $\C_2$, and $B$ is a system of representatives for that flooding.

\smallskip
\noindent\emph{Case 2.3:} Both $(T_1, T_2)$ and $(S_1, S_2)$ are zero circuits.
\smallskip

As $\C$ is optimal, it follows that, as a multi-set, $\{\gamma(T_1), \gamma(T_2), \gamma(S_1), \gamma(S_2)\}$ is either $\{0,0,0,0\}$ or $\{1,1,1,1\}$. So both $\C_1$ and $\C_2$ are optimal. Now let $f_1$ (respectively $f_2$) be the arc of the element in $B$ that represents $(T_1, T_2)$ (respectively $(S_1, S_2)$). Then $f_1$ and $f_2$ are in distinct circuits in either $\C_1$ or $\C_2$, and $B$ is a system of representatives for that flooding. 

This completes all possible cases and therefore the proof of Lemma~\ref{lem:main}.
\end{proof}

Now we are ready to prove that the basis exchange axiom holds, which is the final lemma of this section. As the flooding matroid always has a basis (possibly the empty set), this lemma proves that the flooding matroid is in fact a matroid.

\begin{lemma}
\label{lem:basisExchange}
For any \rooted{} $(G, \gamma, b)$, bases $B_1$ and $B_2$ of $M(G, \gamma, b)$, and $b_1 \in B_1-B_2$, there exists $b_2 \in B_2-B_1$ so that $(B_1-\{b_1\}) \cup \{b_2\}$ is a basis of $M(G, \gamma, b)$.
\end{lemma}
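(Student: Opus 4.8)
The plan is to prove the basis exchange axiom by induction on $|E(G)|$, using Lemma~\ref{lem:main} to find a single transition that works simultaneously for $B_1$ and $B_2$ and then suppressing it. For the base case, when $G$ has no non-loop edge it is a single vertex carrying loops; every flooding consists of those loops as singleton circuits, so $M(G,\gamma,b)$ has one ``coordinate'' per zero loop (whose two representatives $(f,0)$ and $(f^{-1},0)$ are parallel), and exchange is immediate. In the inductive step I first dispose of the non-$4$-edge-connected case. A counting argument shows that each circuit of a flooding uses exactly two half-edges at $b$, hence meets $b$ exactly once; so if some $Y\subseteq V(G)-\{b\}$ has $|\delta(Y)|=2$, then exactly one circuit makes a single excursion through $Y$, covering all of $E(Y)\cup\delta(Y)$. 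I suppress this excursion into one edge of the (order-independent) weight $\gamma(E(Y)\cup\delta(Y))$ and induct on the smaller \rooted{}.

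So assume $(G,\gamma,b)$ is $4$-edge-connected with at least two vertices. Since $\deg(b)$ is even, $b$ is incident to at least one non-loop edge; pick such an edge $e=bv$ and let $h$ be its half-edge at $v$. There are exactly $\deg(v)-1$ transitions at $v$ containing $h$, and Lemma~\ref{lem:main}, applied to $B_1$ and then to $B_2$, says that more than half of them work for $B_1$ and more than half work for $B_2$. Two subsets of a $(\deg(v)-1)$-element set, each of size exceeding half, must meet, so there is one transition $R=\{h,r\}$ that works for both bases. Fix optimal floodings $\C_1$ and $\C_2$ that use $R$ as a transition and have $B_1$ and $B_2$ as systems of representatives. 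Because each circuit meets $b$ exactly once and $e$ is incident to $b$, the edge $e$ is the first or last edge of the circuit of each $\C_i$ that uses $R$; writing $e'$ for the edge containing $r$ and $w$ for its other end, this circuit runs along $e$ and then $e'$ through $b,v,w$.

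Now I suppress $R$: delete $e$ and $e'$ and add a single edge $\hat e$ from $b$ to $w$ with $\gamma(\hat e)=\gamma(e)+\gamma(e')$, obtaining an \rooted{} $G'$ with one fewer edge (it stays connected because every remaining edge still lies on a circuit through $b$). This suppression carries floodings of $G$ that use $R$ bijectively to floodings of $G'$, preserving circuit weights and hence optimality and the flooding number, with ``un-suppressing'' along $R$ as its inverse. On representatives the induced map is the identity except on the two \emph{boundary} representatives of the $R$-circuit, namely $p_e=(e\text{-arc from }b,\ \gamma(e))$ and $p_{e'}=(e'\text{-arc from }v,\ \gamma(e)+\gamma(e'))$, which both map to the single representative of $\hat e$; crucially $p_e$ and $p_{e'}$ are the \emph{same} two ground-set elements for $\C_1$ and $\C_2$, since $R$ fixes the positions and the prefix weights. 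I send $B_1,B_2,b_1$ to $B_1',B_2',b_1'$ in $M(G')$, apply the inductive exchange to get $b_2'\in B_2'-B_1'$, and lift $b_2'$ back. The only case in which $b_1'\notin B_1'-B_2'$ is when both $b_1$ and the $R$-circuit representative in $B_2$ are boundary representatives; then $\{b_1,b_2^\circ\}=\{p_e,p_{e'}\}$, and since both $p_e,p_{e'}$ are representatives of the \emph{same} zero circuit of $\C_1$, the set $(B_1-\{b_1\})\cup\{b_2^\circ\}$ is again a system of representatives for $\C_1$, so I may take $b_2=b_2^\circ$ directly.

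I expect the main obstacle to be making this suppression correspondence rigorous: proving that $G'$ is again an \rooted{} with $\tilde{\nu}(G',\gamma,b)=\tilde{\nu}(G,\gamma,b)$, that the induced map sends bases to bases in both directions, and --- most delicately --- controlling the orientations of the boundary arcs (the $R$-circuit may traverse $e$ either first or last, and $w$ may equal $b$) so that the exchange produced by induction on $M(G')$ lifts to a genuine exchange in $M(G)$. The analogous bookkeeping in the non-$4$-edge-connected reduction is fussier still, since there a whole excursion through $Y$ collapses to $\hat e$ and its internal order may differ between $\C_1$ and $\C_2$; Lemma~\ref{lem:transitivity}, which forbids a basis from containing both $(f_0,0)$ and $(f_1,1)$ for arcs with a common head, is the tool I would use to pin down the admissible representatives throughout.
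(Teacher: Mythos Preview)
Your overall plan mirrors the paper's: handle the one-vertex base case directly, reduce the non-$4$-edge-connected case by contracting a $2$-cut side to a single edge, and in the $4$-edge-connected case use Lemma~\ref{lem:main} twice plus pigeonhole to get a transition $R$ working for both $B_1$ and $B_2$. The one substantive difference is what you do with $R$. You \emph{suppress} the two edges $e,e'$ into a single edge $\hat e$ and induct on $|E(G)|$; the paper instead \emph{splits off} the pair $\{h,h'\}$ to a fresh degree-$2$ vertex $v'$, keeping the edge count fixed but increasing the vertex count, and works with the two-parameter extremal choice ``minimum $|E(G)|$, then \emph{maximum} $|V(G)|$''.

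The payoff of the paper's trick is exactly the bookkeeping you flag as the main obstacle. When you split off rather than suppress, every half-edge, arc, and representative is literally unchanged; $B_1$ and $B_2$ are the \emph{same} subsets of the \emph{same} ground set in the new graph, so there is nothing to lift, no boundary representatives to track, and no direction ambiguity. Your suppression route can be made rigorous---it is the same manoeuvre the paper carries out in Claim~\ref{clm:4edgeConn}, just with a two-edge trail in place of the $Y$-excursion---but then you must run that case analysis twice. In particular, your assertion that the $R$-circuit ``runs along $e$ and then $e'$ through $b,v,w$'' cannot be assumed for both $\C_1$ and $\C_2$ simultaneously: reversing a zero $R$-circuit would disturb its representative in $B_i$, so when $\C_1$ traverses $e$ first and $\C_2$ traverses $e$ last the two hat-representatives are genuinely different ground-set elements of $M(G')$ and your collision analysis needs an explicit case split. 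Your appeal to Lemma~\ref{lem:transitivity} for this bookkeeping is also unnecessary; the paper's $2$-cut reduction does not use it.

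One small slip: ``since $\deg(b)$ is even, $b$ is incident to at least one non-loop edge'' is a non sequitur---the correct reason is that $G$ is connected with at least two vertices.
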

\begin{proof}
Going for a contradiction, suppose that the lemma is false. Then choose a counterexample so that $(G, \gamma, b)$ has as few edges as possible, and, subject to that, as many vertices as possible. This assumption may seem strange now but will prove to be convenient later. Such a choice is possible since an Eulerian graph with $m$ edges has at most $m$ vertices. 

Our aim is to apply Lemma~\ref{lem:main}. So we need a vertex other than $b$, and we need $(G, \gamma, b)$ to be $4$-edge-connected. We take care of these things now.

\begin{claim}
There exists a vertex other than $b$.
\end{claim}
\begin{proof}
If not, then every zero circuit in a flooding consists of a single loop $f$ and must be represented by $(f, 0)$. We can reverse such a circuit to obtain a zero circuit represented by $(f^{-1},0)$. Then the element $b_1$ is of the form $(f,0)$, and we can take $b_2$ to be the element $(f^{-1},0) \in B_2-B_1$.
\end{proof}

The next claim is actually the hardest part of the proof.

\begin{claim}
\label{clm:4edgeConn}
The graph $(G, \gamma, b)$ is $4$-edge-connected.
\end{claim}
\begin{proof}
Otherwise, there exists a set $Y \subseteq V(G)-\{b\}$ with $|\delta(Y)|=2$. Let $(\hat{G}, \hat{\gamma}, b)$ be the \rooted{} that is obtained from $(G, \gamma, b)$ by deleting all vertices in $Y$ and then adding a new edge $\hat{e}$ whose ends are the neighbours of $Y$ (possibly $\hat{e}$ is a loop) and whose weight is the sum of the weights of the edges in $E(Y) \cup \delta(Y)$. Note that $\tilde{\nu}(\hat{G}, \hat{\gamma}, b)=\tilde{\nu}(G, \gamma, b)$. The proof of the claim is fairly straightforward from here; we apply Lemma~\ref{lem:basisExchange} to the graph $(\hat{G}, \hat{\gamma}, b)$, which has fewer edges than $(G, \gamma, b)$. It is somewhat technical to state this precisely though. We begin by giving some definitions related to $B_1$ and $B_2$. So let $i \in \{1,2\}$. 

Fix an optimal flooding $\C_i$ of $(G, \gamma, b)$ so that $B_i$ is a system of representatives for $\C_i$. Let $T_i$ be the unique subtrail of a circuit in $\C_i$ so that the edge-set of $T_i$ is $E(Y) \cup \delta(Y)$. Then there exists an optimal flooding $\hat{\C}_i$ of $(\hat{G}, \hat{\gamma}, b)$ which is obtained from $\C_i$ by replacing $T_i$ with an arc $\hat{f}_i$ whose edge is $\hat{e}$. Now, if no element of $B_i$ has an arc in $T_i$, then $B_i$ is also a system of representatives for $\hat{\C}_i$ and we set $\hat{B}_i \coloneqq B_i$. Otherwise, let $(f_i, \alpha_i) \in B_i$ be the element whose arc is in $T_i$; then there exists $\hat{\alpha}_i \in \{0,1\}$ so that $(B_i-\{(f_i, \alpha_i)\}) \cup \{(\hat{f}_i, \hat{\alpha}_i)\}$ is a system of representatives for $\hat{\C}_i$, and we let $\hat{B}_i$ be this set. This completes the definitions.

Next we apply Lemma~\ref{lem:basisExchange} to $(\hat{G}, \hat{\gamma}, b)$, which has fewer edges than $(G, \gamma, b)$. So let $\hat{b}_1$ be the element of $\hat{B}_1$ that corresponds to $b_1$. It is possible that $\hat{b}_1$ is in $\hat{B}_2$. In this case, $\hat{b}_1=(\hat{f}_1, \hat{\alpha}_1)=(\hat{f}_2, \hat{\alpha}_2)$, and $(B_1-\{b_1\}) \cup \{(f_2, \alpha_2)\}$ is a basis of $M(G, \gamma, b)$. To see this, note that it is a system of representatives for the flooding that is obtained from $\hat{\C}_1$ by replacing the arc $\hat{f}_1$ with the trail $T_2$.

So we may assume that $\hat{b}_1 \in \hat{B}_1-\hat{B}_2$. Then there exists $\hat{b}_2 \in \hat{B}_2-\hat{B}_1$ so that $(\hat{B}_1-\{\hat{b}_1\}) \cup \{\hat{b}_2\}$ is a basis of $M(\hat{G}, \hat{\gamma}, b)$. If $\hat{b}_2$ is in $B_2$ as well, then $(B_1-\{b_1\}) \cup \{\hat{b}_2\}$ is a basis of $M(G, \gamma, b)$; we replace $\hat{f}_1$ or its reversal by $T_1$ or its reversal. Otherwise, $\hat{b}_2=(\hat{f}_2, \hat{\alpha}_2)$, and instead $(B_1-\{b_1\}) \cup \{(f_2, \alpha_2)\}$ is a basis of $M(G, \gamma, b)$; we replace $\hat{f}_2$ by $T_2$. We note that $(f_2, \alpha_2)$ is not in $B_1$ simply because $(B_1-\{b_1\}) \cup \{(f_2, \alpha_2)\}$ corresponds to an optimal flooding and therefore has the same size as $B$. This completes the proof of Claim~\ref{clm:4edgeConn}.
\end{proof}

Now, fix a vertex $v \neq b$ which is joined to $b$ by an edge $e$; such things exist since $G$ is connected and has a vertex other than $b$.  Let $h$ be the half-edge of $e$ which is at $v$. By Lemma~\ref{lem:main} applied to $B_1$ and $B_2$, there exists a transition $\{h, h'\}$ at $v$ so that there are optimal floodings $\C_1$ and $\C_2$ which both have $\{h, h'\}$ as a transition, and which have $B_1$ and $B_2$ (respectively) as a system of representatives. Let $(\hat{G}, \hat{\gamma}, b)$ be the \rooted{} that is obtained from $(G, \gamma, b)$ by adding a new vertex $v'$ and making the half-edges $h$ and $h'$ incident to $v'$ instead of $v$. Then $B_1$ and $B_2$ are both bases of $M(\hat{G}, \hat{\gamma}, b)$. Moreover, Lemma~\ref{lem:basisExchange} holds for $(\hat{G}, \hat{\gamma}, b)$ since it has the same number of edges as $(G, \gamma, b)$ but more vertices. It follows that the lemma holds for $(G, \gamma, b)$ as well. This is a contradiction and completes the proof of Lemma~\ref{lem:basisExchange}.
\end{proof}

\section{Completing the proof}
\label{sec:reduction}

As discussed earlier, we will prove Theorem~\ref{thm:mainFlooding} by reducing to the case that the flooding matroid has rank~$1$. The key step is to show that in a ``minimum counterexample'', every arc which is not incident to the root is in two non-loop elements of the flooding matroid. We  accomplish this step in the next two lemmas. First we need to define the ``reduction move''; refer to Figure~\ref{fig:reductionMove} for an example. 

\begin{figure}
\centering
\begin{tikzpicture}[scale=1, every node/.style={MyNode}]
    \def \w {3.75}
    \def \h {1.5}
    \node[label=below:$b$] (a) at (.5*\w,0) {};
    \node[rectangle, draw=none, fill=none] (G) at (.5*\w,-1.1) {$(G, \gamma, b)$};
    \node[rectangle, draw=none, fill=none, label=below:$e$] (G) at (.5*\w,\h) {};
    \node (A1) at (0,\h) {};
    \node (A2) at (\w,\h) {};
    \node (A3) at (.5*\w,1.35*\h) {};
    \node (A4) at (.5*\w, 2*\h) {};
    \draw[thick] (A1) -- (A2) -- (A3) -- (A1) -- (A4) -- (A2);
    \draw[thick] (A3) to [bend left=14] (A4);
    \draw[thick] (A3) to [bend right=14] (A4);
    \draw[MyRedArc] (A1)-- (A2) -- (A4);
    \draw[MyRedArc] (A1) -- (a);
    \draw[thick] (A2) -- (a);
\end{tikzpicture}\hskip 1.5cm
\begin{tikzpicture}[scale=1, every node/.style={MyNode}]
    \def \h {1.5}
    \node[draw=none, fill=none] (A4) at (0,2*\h) {};
    \node[draw=none, fill=none] (bottom) at (0,-1.1) {};
    \node[rectangle, draw=none, fill=none] (center) at (0, \h) {\Large$\longrightarrow$};
\end{tikzpicture}\hskip 1.5cm
\begin{tikzpicture}[scale=1, every node/.style={MyNode}]
    \def \w {3.75}
    \def \h {1.5}
    \node[label=below:$b$] (a) at (.5*\w,0) {};
    \node[rectangle, draw=none, fill=none] (G) at (.5*\w,-1.1) {$(\hat{G}, \hat{\gamma}, b)$};
    \node (A1) at (0,\h) {};
    \node (A2) at (\w,\h) {};
    \node (A3) at (.5*\w,1.35*\h) {};
    \node (A4) at (.5*\w, 2*\h) {};
    \draw[thick] (A2) -- (A3) -- (A1) -- (A4) -- (A2);
    \draw[thick] (A3) to [bend left=14] (A4);
    \draw[thick] (A3) to [bend right=14] (A4);
    \draw[MyRedArc] (A2) -- (A4);
    \draw[MyRedArc] (A1) -- (a);
    \draw[thick] (A2) -- (a);
    \draw[thick] (A1) to [bend left=24] (a);
    \draw[MyRedArc] (A2) to [bend right=24] (a);
    \node[draw=none, fill=none] (a') at (.5*\w,.15) {};
    \draw[ultra thick, dashed] (.5*\w-.09*\w, .47*\h) -- (a');
    \draw[ultra thick, dashed] (.5*\w+.09*\w, .47*\h) -- (a');
    \node[rectangle, draw=none, fill=none] at (.5*\w,.3) {};
\end{tikzpicture}
\caption{An \rooted{} (left), and an $e$-reduction of it (right).}
\label{fig:reductionMove}
\end{figure}
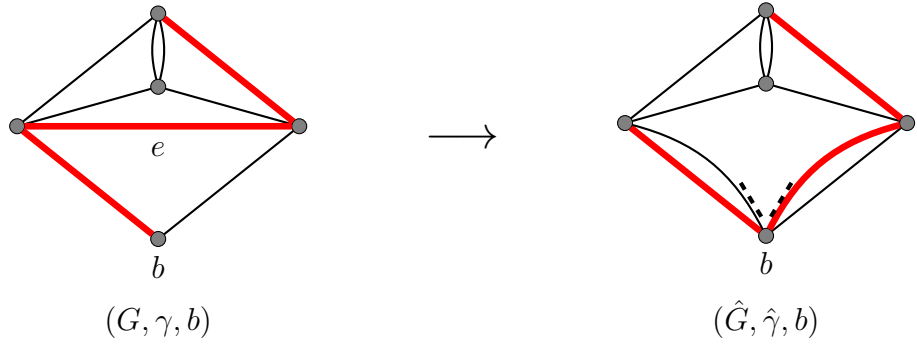

Let $(G, \gamma, b)$ be an \rooted{}, and let $e$ be an edge of $G-b$. For convenience, let $h$ and $r$ be the half-edges so that $e = \{h,r\}$. Then an \emph{$e$-reduction of $(G, \gamma, b)$} is any \rooted{} $(\hat{G}, \hat{\gamma}, b)$ that has a transition $\{h',r'\}$ at $b$ so that \begin{enumerate}
    \item $\hat{G}$ is obtained from $G$ by deleting $e$ and adding the edges $\{h,h'\}$ and $\{r,r'\}$, and
    \item the weights of $\{h,h'\}$ and $\{r,r'\}$ (according to $\hat{\gamma}$) sum to $\gamma(e)$.
\end{enumerate}
\noindent Note that $(\hat{G}, \hat{\gamma}, b)$ is not unique since there are two ways to weight its new edges. Furthermore, $\hat{G}-b$ has fewer edges than $G-b$; this is the sense in which $\hat{G}$ is ``smaller'' than $G$. Sometimes we do not want to specify the edge $e$; so we say that a \emph{reduction of $(G, \gamma, b)$} is any \rooted{} which is an $e$-reduction of $(G, \gamma, b)$ for some edge $e$ of $G-b$.

The first lemma shows that if there is a reduction of $(G, \gamma, b)$ whose flooding number is not too much larger, then we can ``lift'' its certificate to $(G, \gamma, b)$.

\begin{lemma}
\label{lem:critical}
If $(G, \gamma, b)$ is an \rooted{} which has a reduction $(\hat{G}, \hat{\gamma}, b)$ so that $(\hat{G}, \hat{\gamma}, b)$ has flooding number at most $\tilde{\nu}(G, \gamma, b)+1$ and $(\hat{G}, \hat{\gamma}, b)$ has a certificate, then $(G, \gamma, b)$ also has a certificate. 
\end{lemma}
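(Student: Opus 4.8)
### Proof Proposal

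The plan is to take a certificate $(\hat{X}, \hat{\gamma}')$ for the reduction $(\hat{G}, \hat{\gamma}, b)$ and transform it into a certificate $(X, \gamma')$ for $(G, \gamma, b)$. First I would set up notation following the definition of an $e$-reduction: let $e = \{h, r\}$ be the edge of $G-b$ being reduced, let $\{h', r'\}$ be the transition at $b$ used in the reduction, and recall that $\hat{G}$ is obtained by deleting $e$ and adding $\hat{e}_1 = \{h, h'\}$ and $\hat{e}_2 = \{r, r'\}$ with $\hat{\gamma}(\hat{e}_1) + \hat{\gamma}(\hat{e}_2) = \gamma(e)$. The natural candidate is to take $X$ to be essentially $\hat{X}$ (the two \rooteds{} share the vertex set except for the incidences at $b$), and to define $\gamma'$ as the signature on $G$ obtained from $\hat{\gamma}'$ by restoring the edge $e$ with an appropriately chosen weight. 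The key arithmetic input is that $\tilde{\nu}(\hat{G}, \hat{\gamma}, b) \leq \tilde{\nu}(G, \gamma, b) + 1$, so it suffices to show that the right-hand-side quantity $\hat{\gamma}'(E(\hat{X})) + \tfrac{1}{2}|\delta_{\hat{G}}(\hat{X})| - \mathrm{odd}_{\hat{\gamma}'}(\hat{G} - \hat{X})$ decreases by at most one when we pass to $G$; combined with Lemma~\ref{lem:easyDirection}, this pins down $(X, \gamma')$ as a certificate.

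The central case analysis is governed by how the edges $e$, $\hat{e}_1$, $\hat{e}_2$ sit relative to the vertex set $\hat{X} \ni b$. Since $\hat{e}_1$ and $\hat{e}_2$ are both incident to $b \in \hat{X}$, each of them lies in either $E(\hat{X})$ or $\delta(\hat{X})$ depending on whether its other end is inside $\hat{X}$. In $G$, the edge $e = \{h, r\}$ reappears joining the ends of $h$ and $r$, which are the non-$b$ ends of $\hat{e}_1$ and $\hat{e}_2$ respectively. I would first apply a shifting to $\hat{\gamma}'$ so that I may cleanly compare signatures — because shifting does not change the minimized quantity (the right-hand side is shifting-invariant, as $\gamma'(E(X))$ and the odd-component count depend only on circuit weights relative to the cut) — and arrange the weights of $\hat{e}_1, \hat{e}_2$ conveniently. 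Then I would track the three contributions separately: the change in $\gamma'(E(X))$ coming from deleting $\hat{e}_1, \hat{e}_2$ and adding $e$; the change in $\tfrac12|\delta(X)|$; and the change in the odd-component count. The first two changes are local and bounded by a small constant computed from whether the relevant ends lie inside or outside $X$.

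The main obstacle will be controlling $\mathrm{odd}_{\gamma'}(G - X)$ versus $\mathrm{odd}_{\hat{\gamma}'}(\hat{G} - \hat{X})$. Deleting $X = \hat{X}$ from $G$ versus $\hat{G}$ differs exactly in the presence of the edge $e$ among the edges with both ends outside $X$ (or one end outside), so restoring $e$ can either merge two components of the outside into one (changing the parity count) or add an internal edge within a single component (changing only that component's $\gamma'$-parity via the $\gamma'(E(Y) \cup \delta(Y))$ term). I would organize this into subcases according to whether the ends of $e$ lie in the same or different components of $G - X$ and whether those components are $\gamma'$-odd. The delicate point is to verify that in every subcase the net decrease in the right-hand side is at most one, so that tightness of the certificate for $(\hat{G}, \hat{\gamma}, b)$ plus the hypothesis $\tilde{\nu}(\hat{G},\hat{\gamma},b) \le \tilde{\nu}(G,\gamma,b)+1$ forces tightness for $(G, \gamma, b)$; the freedom to choose the weight of $e$ in defining $\gamma'$ (recall the reduction leaves the two new edge-weights only constrained to sum to $\gamma(e)$) is precisely what I expect to use to fix any off-by-one parity discrepancy in the odd-component count.
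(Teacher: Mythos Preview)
Your overall plan---lift the certificate $(X,\hat\gamma')$ to $(X,\gamma')$ and case-split on where the ends of $e$ sit relative to $X$---matches the paper's, but the arithmetic you state does not close. Showing that the right-hand side ``decreases by at most one'' when passing from $\hat G$ to $G$, together with Lemma~\ref{lem:easyDirection}, gives only a lower bound on the quantity for $G$; it does not force equality with $\tilde\nu(G,\gamma,b)$. The step you are missing is a parity argument made \emph{before} the case analysis: since any flooding of $(G,\gamma,b)$ yields one of $(\hat G,\hat\gamma,b)$ with at least as many non-zero circuits, $\tilde\nu(G,\gamma,b)\le\tilde\nu(\hat G,\hat\gamma,b)$; and since $\gamma(E(G))$ and $\hat\gamma(E(\hat G))$ have the same parity, so do the two flooding numbers. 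Combined with the hypothesis $\tilde\nu(\hat G,\hat\gamma,b)\le\tilde\nu(G,\gamma,b)+1$, this forces $\tilde\nu(G,\gamma,b)=\tilde\nu(\hat G,\hat\gamma,b)$ outright. After that, the correct target in each case is that the right-hand side for $(X,\gamma')$ in $G$ is \emph{at most} that for $(X,\hat\gamma')$ in $\hat G$ (an upper bound, not a bound on how far it drops), and then Lemma~\ref{lem:easyDirection} finishes.

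There is also no ``freedom to choose the weight of $e$ in defining $\gamma'$''. A certificate requires $\gamma'$ to be a shifting of $\gamma$; if you perform the same vertex-shifts that take $\hat\gamma$ to $\hat\gamma'$, then $\gamma'$ agrees with $\hat\gamma'$ on $E(G)\setminus\{e\}$ and necessarily $\gamma'(e)=\hat\gamma'(\hat e_1)+\hat\gamma'(\hat e_2)$ (equivalently, $(\hat G,\hat\gamma',b)$ is an $e$-reduction of $(G,\gamma',b)$). You cannot alter $\gamma'(e)$ alone without changing other edge-weights. The paper runs the four-case analysis with this forced value of $\gamma'(e)$ and checks in each case that the right-hand side does not increase; no extra degree of freedom is needed or available.
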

\begin{proof}
Let $e$ be the edge of $G-b$ so that $(\hat{G}, \hat{\gamma}, b)$ is an $e$-reduction of $(G, \gamma, b)$. By assumption, $\tilde{\nu}(\hat{G}, \hat{\gamma}, b) \leq \tilde{\nu}(G, \gamma, b)+1$. Moreover, since $\gamma(E(G))$ has the same parity as $\hat{\gamma}(E(\hat{G}))$, the flooding numbers also have the same parity. Thus we actually have that $\tilde{\nu}(\hat{G}, \hat{\gamma}, b)\leq \tilde{\nu}(G, \gamma, b)$. Also, since any flooding of $(G, \gamma, b)$ with $k$ non-zero circuits yields a flooding of $(\hat{G}, \hat{\gamma}, b)$ with at least $k$ non-zero circuits, we have that $\tilde{\nu}(G, \gamma, b)\leq \tilde{\nu}(\hat{G}, \hat{\gamma}, b)$. So in fact $\tilde{\nu}(G, \gamma, b)= \tilde{\nu}(\hat{G}, \hat{\gamma}, b)$.

Let $(X, \hat{\gamma}')$ be a certificate of $(\hat{G}, \hat{\gamma}, b)$. By performing the same sequence of shiftings in $(G, \gamma, b)$, we can find a shifting $\gamma'$ of $\gamma$ so that $(\hat{G}, \hat{\gamma}', b)$ is an $e$-reduction of $(G, \gamma', b)$. We now show that $(X, \gamma')$ is a certificate for $(G, \gamma, b)$ by breaking into cases based on where the ends of $e$ ``lie''.

\smallskip
\noindent\emph{Case 1:} Both ends of $e$ are in $X$.
\smallskip

If $\gamma'(e) = 0$ then certainly $(X, \gamma')$ is a certificate. Otherwise, one of the new edges of $\hat{G}$ is non-zero according to $\hat{\gamma}'$, and again $(X, \gamma')$ is a certificate.

\smallskip
\noindent\emph{Case 2:} Exactly one end of $e$ is in $X$.
\smallskip

Then the other end of $e$ is in a component of $\hat{G}-X$; write $Y$ for the vertex-set of that component. The only way $(X, \gamma')$ might not be a certificate is if $Y$ is odd in $(\hat{G}, \hat{\gamma}', b)$ but not in $(G, \gamma', b)$. However, if this occurs, then the new edge of $(\hat{G}, \hat{\gamma}', b)$ which has both ends in $X$ is non-zero. Therefore, it contributed to $\hat{\gamma}'(E(X))$ but not to $\gamma'(E(X))$; so again $(X, \gamma')$ is a certificate.

\smallskip
\noindent\emph{Case 3:} The ends of $e$ are in the same component of $\hat{G}-X$.
\smallskip

Then $|\delta_{G}(X)|=|\delta_{\hat{G}}(X)|-2$, and regardless of whether the vertex set of that component is odd in $(G, \gamma', b)$, we still have that $(X, \gamma')$ is a certificate.

\begin{figure}
\centering
\begin{tikzpicture}[scale=1, every node/.style={MyNode}]
    \def \w {2}
    \def \boxX {-.6}
    \def \boxY {-.6}
    \def \boxH {1.8}
    \def \boxW {5}
    \draw[dashed] (\boxX, \boxY) --++ (\boxW,0) --++ (0,-\boxH) --++ (-\boxW,0) -- cycle;
    \node[label=center:$X$, draw=none, fill=none] (A) at (\boxX+.3,\boxY-\boxH+.3) {};
    \node[label=below:$b$] (a) at (\boxX+.68*\boxW,\boxY-\boxH+.6) {};
    \node[rectangle, draw=none, fill=none] (G) at (\boxX+.5*\boxW,\boxY-1*\boxH-.5) {$(\hat{G}, \hat{\gamma}', b)$};
    \node (A1) at (0,0) {};
    \node (A2) at (\w,0) {};
    \node (A3) at (.5*\w,.22*\w) {};
    \node (A4) at (.5*\w,.66*\w) {};
    \draw[thick] (A1) -- (A2) -- (A3) -- (A1) -- (A4) -- (A2);
    \draw[thick] (A3) to [bend left=14] (A4);
    \draw[thick] (A3) to [bend right=14] (A4);
    \draw[MyRedArc] (A2) -- (A4);
    \node (A1') at (-.15, \boxY -.4) {};
    \draw[MyRedArc] (A1) -- (A1');
    \draw[thick] (A2) -- (a);
    \node (B1) at (\boxX+\boxW-.8,0) {};
    \node (B1') at (\boxX+\boxW-.8+.35, \boxY -.4) {};
    \draw[thick] (B1) -- (B1');
    \draw[MyRedArc] (B1) -- (a);
    \draw[MyRedArc] (B1) to [out=65,in=115,looseness=30] (B1);
\end{tikzpicture}\hskip 1.2cm
\begin{tikzpicture}[scale=1, every node/.style={MyNode}]
    \def \w {2}
    \def \boxX {-.6}
    \def \boxY {-.6}
    \def \boxH {1.8}
    \def \boxW {5}
    \node[draw=none, fill=none] (A4) at (0,.66*\w) {};
    \node[draw=none, fill=none] (bottom) at (0,\boxY-\boxH) {};
    \node[rectangle, draw=none, fill=none] (center) at (0, .33*\w+.5*\boxY-.5*\boxH) {\Large$\longrightarrow$};
\end{tikzpicture}\hskip 1.2cm
\begin{tikzpicture}[scale=1, every node/.style={MyNode}]
    \def \w {2}
    \def \boxX {-.6}
    \def \boxY {-.6}
    \def \boxH {1.8}
    \def \boxW {5}
    \draw[dashed] (\boxX, \boxY) --++ (\boxW,0) --++ (0,-\boxH) --++ (-\boxW,0) -- cycle;
    \node[label=center:$X$, draw=none, fill=none] (A) at (\boxX+.3,\boxY-\boxH+.3) {};
    \node[label=below:$b$] (a) at (\boxX+.68*\boxW,\boxY-\boxH+.6) {};
    \node[rectangle, draw=none, fill=none] (G) at (\boxX+.5*\boxW,\boxY-1*\boxH-.5) {$(G, \gamma', b)$};
    \node (A1) at (0,0) {};
    \node (A2) at (\w,0) {};
    \node (A3) at (.5*\w,.22*\w) {};
    \node (A4) at (.5*\w,.66*\w) {};
    \draw[thick] (A1) -- (A2) -- (A3) -- (A1) -- (A4) -- (A2);
    \draw[thick] (A3) to [bend left=14] (A4);
    \draw[thick] (A3) to [bend right=14] (A4);
    \draw[MyRedArc] (A2) -- (A4);
    \node (A1') at (-.15, \boxY -.4) {};
    \draw[MyRedArc] (A1) -- (A1');
    \node(B1) at (\boxX+\boxW-.8,0) {};
    \node (B1') at (\boxX+\boxW-.8+.35, \boxY -.4) {};
    \draw[thick] (B1) -- (B1');
    \draw[MyRedArc] (A2) -- (B1) node[midway,above, draw=none, fill=none]{\textcolor{black}{$e$}};
    \draw[MyRedArc] (B1) to [out=65,in=115,looseness=30] (B1);
\end{tikzpicture}
\caption{A depiction for \emph{Case 4} of Lemma~\ref{lem:critical}, where $Y_1$ and $Y_2$ are both odd in $(\hat{G}, \hat{\gamma}', b)$.}
\label{fig:critical}
\end{figure}
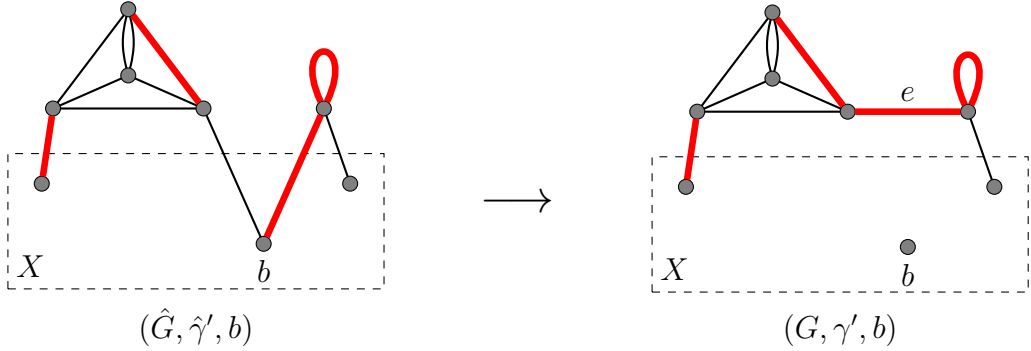

\smallskip
\noindent\emph{Case 4:} The ends of $e$ are in different components of $\hat{G}-X$.
\smallskip

Let $Y_1$ and $Y_2$ be the vertex-sets of those two components. Again we have that $|\delta_{G}(X)|=|\delta_{\hat{G}}(X)|-2$. So the only possible problem is if $Y_1$ and $Y_2$ are both odd in $(\hat{G}, \hat{\gamma}', b)$ and $Y_1 \cup Y_2$ is not odd in $(G, \gamma', b)$. However this cannot occur since $|\delta_G(Y_1 \cup Y_2)|/2=|\delta_{\hat{G}}(Y_1)|/2+|\delta_{\hat{G}}(Y_2)|/2-1$, while the parity of the number of relevant non-zero edges just sums (as in Figure~\ref{fig:critical}). So indeed $(X, \gamma')$ is a certificate for $(G, \gamma, b)$.

This completes all of the cases and therefore also the proof of Lemma~\ref{lem:critical}.
\end{proof}

The second lemma essentially says that if Lemma~\ref{lem:critical} cannot be applied, then the flooding matroid has many non-loop elements.

\begin{lemma}
\label{lem:loops}
If $(G, \gamma, b)$ is an \rooted{} whose reductions all have flooding number at least $\tilde{\nu}(G, \gamma, b)+2$, then each arc of $G-b$ is in two non-loop elements of the flooding matroid $M(G, \gamma, b)$.
\end{lemma}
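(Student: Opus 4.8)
The plan is to prove the contrapositive in the natural form: assuming some arc $f$ of $G-b$ fails to be in two non-loop elements of $M(G,\gamma,b)$, I will construct a reduction of $(G,\gamma,b)$ whose flooding number is at most $\tilde{\nu}(G,\gamma,b)+1$, contradicting the hypothesis. First I would fix an arc $f$ of $G-b$ and record what it means for $(f,0)$ and $(f,1)$ to be non-loop elements: $(f,\alpha)$ is a non-loop element of the flooding matroid precisely when it lies in some basis together with other independent elements, i.e.\ when there is an optimal flooding in which $f$ is the representing arc of a zero circuit and the set $\{(f,\alpha)\}$ extends to a full system of representatives. The failure of $f$ to be in two non-loop elements thus restricts which optimal floodings can use $f$ as a representative and with which weight $\alpha$, and I would translate this into a structural statement about the transitions available at the tail/head of $f$.

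The central construction would be to take the edge $e$ containing $f$ and perform an $e$-reduction that ``respects'' an optimal flooding. Concretely, I would fix an optimal flooding $\C$ realizing a basis $B$, and choose the transition $\{h',r'\}$ at $b$ needed for the $e$-reduction (recall $e=\{h,r\}$) so that the reduction splices $e$ into a circuit of $\C$ at $b$ in a controlled way; the two weightings of the new edges $\{h,h'\}$ and $\{r,r'\}$ correspond exactly to the two choices $\alpha\in\{0,1\}$ governing whether the reduced flooding gains a non-zero circuit. The point is that a reduction always satisfies $\tilde{\nu}(\hat{G},\hat{\gamma},b)\geq\tilde{\nu}(G,\gamma,b)$ (this is the easy inequality already used in Lemma~\ref{lem:critical}, since a flooding of $G$ lifts to one of $\hat{G}$ losing no non-zero circuits), and a jump of at least $2$ forces, by parity, that \emph{every} reduction strictly increases the number of attainable non-zero circuits. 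I would argue that such an increase can only happen if, for the relevant arc $f$, one can independently realize both the $\alpha=0$ and the $\alpha=1$ behaviour — that is, both $(f,0)$ and $(f,1)$ must be genuine non-loop elements of $M(G,\gamma,b)$.

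The technical heart is showing that the two weight-choices in the $e$-reduction are simultaneously realizable as \emph{distinct, independent} basis elements, rather than the same element or a loop. Here I expect to lean on Lemma~\ref{lem:main} (applied after possibly reducing to the $4$-edge-connected case, exactly as in Lemma~\ref{lem:basisExchange}) to produce an optimal flooding with a prescribed transition at the endpoint of $e$, and on the basis-exchange structure from Lemma~\ref{lem:basisExchange} now that $M(G,\gamma,b)$ is known to be a matroid. Lemma~\ref{lem:transitivity} should be used to rule out the degenerate possibility that $(f,0)$ and $(f,1)$ collapse: if they were parallel or one were a loop, a single reduction would fail to raise the flooding number by the full $2$, giving the contradiction. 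I would handle the arc $f$ and its reversal $f^{-1}$ together, since a reduction of $G$ at the edge $e$ treats both orientations symmetrically.

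The step I expect to be the main obstacle is the precise bookkeeping that converts ``the flooding number jumps by $2$ under every reduction'' into ``$f$ lies in two non-loop elements.'' The difficulty is that a reduction mixes a global quantity (the flooding number of $\hat{G}$) with a local choice (the weighting of the two new edges and the transition $\{h',r'\}$ at $b$), so I must verify that the \emph{two} weightings of the reduction genuinely correspond to the \emph{two} potential elements $(f,0),(f,1)$ and that both, independently, extend to optimal floodings of $G$ itself (not merely of the reduced graph). Making the correspondence between ``weighting of the reduced edges'' and ``the value $\alpha$ in the representative'' rigorous, while keeping track of which orientation of $f$ is being represented, is where the argument will require the most care; I would structure it by showing each of the two reductions, if it increased $\tilde\nu$, would exhibit a specific optimal flooding of $G$ whose system of representatives contains the corresponding $(f,\alpha)$ as a non-loop element.
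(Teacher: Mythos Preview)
Your proposal has the right endpoint in sight but takes a wrong turn at the start and then reaches for machinery that is not needed.

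The main gap is the direction of the argument. You propose to fix an optimal flooding $\C$ of $(G,\gamma,b)$ and push it forward to a reduction. The paper does the opposite, and for good reason: the hypothesis gives you information about the \emph{reductions}, so that is where you should start. Concretely, fix the edge $e$ of $G-b$ containing $f$, write down the two $e$-reductions $(\hat G,\hat\gamma_0,b)$ and $(\hat G,\hat\gamma_1,b)$ (they differ only in the weights of the two new edges $\hat e_1,\hat e_2$), and take an optimal flooding $\hat\C_i$ of each. Since $\tilde\nu(\hat G,\hat\gamma_i,b)\ge\tilde\nu(G,\gamma,b)+2$, an easy count shows that neither new edge can lie in a zero circuit of $\hat\C_i$ (otherwise one could convert $\hat\C_i$ into a flooding of $G$ with too many non-zero circuits). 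Now a short case analysis on whether $\hat e_1,\hat e_2$ lie in the same circuit of $\hat\C_i$ or in different ones converts $\hat\C_0$ and $\hat\C_1$ back into optimal floodings of $G$ in which $(f,0)$ and $(f,1)$, respectively, represent a zero circuit. That is the whole proof; it is elementary and self-contained.

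Several specific points. First, there is no ``choice of transition $\{h',r'\}$ at $b$'' in an $e$-reduction: $h'$ and $r'$ are brand-new half-edges, and the only freedom is the weighting of the two new edges. Second, Lemmas~\ref{lem:main} and~\ref{lem:basisExchange} are not needed here, nor is $4$-edge-connectivity; you are over-engineering. Third, Lemma~\ref{lem:transitivity} concerns two \emph{different} arcs with the same head, so it does not speak to whether $(f,0)$ and $(f,1)$ ``collapse''; and in any case we do not need $(f,0)$ and $(f,1)$ to be independent of each other, only that each is individually a non-loop, i.e., lies in \emph{some} basis. Your very last sentence (``each of the two reductions \dots would exhibit a specific optimal flooding of $G$ whose system of representatives contains the corresponding $(f,\alpha)$'') is exactly the right idea; build the argument around that, starting from $\hat\C_0,\hat\C_1$ rather than from a flooding of $G$.
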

\begin{proof}
Let $f$ be an arc of $G-b$, and let $e$ be the corresponding edge. We are trying to prove that for $i=0,1$, there exists an optimal flooding of $(G, \gamma, b)$ that contains a zero circuit represented by $(f, i)$. 

Let $(\hat{G}, \hat{\gamma}_0, b)$ be an $e$-reduction of $(G, \gamma, b)$; such an \rooted{} exists. Let $\hat{e}_1$ and $\hat{e}_2$ denote its two new edges. By adding~$1$ to the weights of $\hat{e}_1$ and $\hat{e}_2$, we can obtain another $e$-reduction $(\hat{G}, \hat{\gamma}_1, b)$ of $(G, \gamma, b)$. Both of these two new \rooteds{} have flooding number at least $\tilde{\nu}(G, \gamma, b)+2$. Let $\hat{\C}_0$ and $\hat{\C}_1$ be optimal floodings of $(\hat{G}, \hat{\gamma}_0, b)$ and $(\hat{G}, \hat{\gamma}_1, b)$, respectively. First we prove a claim.

\begin{claim}
\label{clm:nonZeroCirc}
Neither $\hat{e}_1$ nor $\hat{e}_2$ is in a zero circuit in $\hat{\C}_0$ or $\hat{\C}_1$.
\end{claim}
\begin{proof}
Going for a contradiction, suppose that for some $i\in \{0,1\}$, the flooding $\hat{C}_i$ of $(\hat{G}, \hat{\gamma}_i, b)$ does contain such a zero circuit. If $\hat{e}_1$ and $\hat{e}_2$ are in different circuits in $\hat{C}_i$, then we obtain a contradiction to the fact that the reduction has larger flooding number. Otherwise, $\hat{e}_1$ and $\hat{e}_2$ are in the same circuit $\hat{C} \in \hat{C}_i$. Then in $(G, \gamma, b)$, the circuit $\hat{C}$ becomes a circuit which contains $e$. Since $(G, \gamma, b)$ is connected, this circuit can be ``attached'' back onto some other circuit in $\hat{\C}_i-\{\hat{C}\}$. This again contradicts the fact that the reduction has larger flooding number.
\end{proof}

Now we break into two cases based where $\hat{e}_1$ and $\hat{e}_2$ ``lie'' in $\hat{\C}_0$ and $\hat{\C}_1$. 

\smallskip
\noindent\emph{Case~1:} There exists $i \in \{0,1\}$ so that $\hat{e}_1$ and $\hat{e}_2$ are in the same circuit in $\hat{\C}_i$.
\smallskip

Let $\hat{C} \in \hat{\C}_i$ be that circuit. Then $\hat{C}$ is non-zero by Claim~\ref{clm:nonZeroCirc}. So, similarly to before, we can obtain a flooding $\C$ of $(G, \gamma, b)$ by ``attaching'' this circuit onto some circuit of $\hat{\C}_i-\{\hat{C}\}$. Since the flooding number of the reduction is least two higher, $\C$ is an optimal flooding of $(G, \gamma, b)$. Moreover, after possibly reversing a circuit in $\C$, we can find trails $T_1, C, T_2$ so that $(T_1, C, T_2)$ is a zero circuit in $\C$, and $C$ is a non-zero circuit which contains the arc $f$. We can obtain another optimal flooding of $(G, \gamma, b)$ by replacing $(T_1, C, T_2)$ with the circuit $(T_2^{-1}, C, T_1^{-1})$. Then one of $(T_1, C, T_2)$, $(T_2^{-1}, C, T_1^{-1})$ is represented by $(f,0)$, the other is represented by $(f,1)$. That completes this case.

\smallskip
\noindent\emph{Case~2:} The edges $\hat{e}_1$ and $\hat{e}_2$ are in distinct circuits in each of $\hat{\C}_0$ and $\hat{\C}_1$.
\smallskip

Again we use the fact that these circuits are non-zero by Claim~\ref{clm:nonZeroCirc}. Now, up to symmetry between $\hat{e}_1$ and $\hat{e}_2$, we may assume that $f$ is an ordered pair of half-edges $(h_1, h_2)$, where $h_1$ is a half-edge of $\hat{e}_1$ and $h_2$ is a half-edge of $\hat{e}_2$. Then the flooding $\hat{\C}_0$ of $(\hat{G}, \hat{\gamma}_0, b)$ yields an optimal flooding of $(G, \gamma, b)$ where $(f, 1+\hat{\gamma}_0(\hat{e}_2))$ represents a zero circuit. Likewise, $\hat{\C}_1$ yields an optimal flooding of $(G, \gamma, b)$ where $(f, 1+\hat{\gamma}_1(\hat{e}_2))$ represents a zero circuit. Since $\hat{\gamma}_0(\hat{e}_2) \neq \hat{\gamma}_1(\hat{e}_2)$, this completes the proof of Lemma~\ref{lem:loops}.
\end{proof}

We are ready to prove Theorem~\ref{thm:mainFlooding}, which is restated below for convenience.

\thmMain*
\begin{proof}
Suppose for a contradiction that the theorem is false. Let $(G, \gamma, b)$ be a counterexample so that $|E(G-b)|$ is as small as possible and, subject to that, so that $|E(G)|$ is as small as possible. Recall that we have already shown one direction of the inequality in Lemma~\ref{lem:easyDirection}. 

First we prove a couple of claims, and then we use them to show that the flooding matroid $M(G, \gamma, b)$ has rank~$1$. The theorem will follow shortly after.

\begin{claim}
\label{clm:twoNonLoop}
Each arc of $G-b$ is in two non-loop elements of $M(G, \gamma, b)$.
\end{claim}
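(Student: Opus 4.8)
The plan is to combine Lemma~\ref{lem:loops} with the minimality of the counterexample $(G, \gamma, b)$. The key observation is that the minimality assumption forces the hypothesis of Lemma~\ref{lem:loops} to hold: every reduction of $(G, \gamma, b)$ has flooding number at least $\tilde{\nu}(G, \gamma, b)+2$. Once that is established, the claim follows immediately by applying Lemma~\ref{lem:loops}.

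First I would argue that no reduction of $(G, \gamma, b)$ can have flooding number at most $\tilde{\nu}(G, \gamma, b)+1$. Suppose toward a contradiction that $(\hat{G}, \hat{\gamma}, b)$ is a reduction with $\tilde{\nu}(\hat{G}, \hat{\gamma}, b) \leq \tilde{\nu}(G, \gamma, b)+1$. By construction, any reduction satisfies $|E(\hat{G}-b)| < |E(G-b)|$, since the reduction move removes the edge $e$ from $G-b$ and the two replacement edges are both incident to $b$. Therefore $(\hat{G}, \hat{\gamma}, b)$ is \emph{not} a counterexample to Theorem~\ref{thm:mainFlooding}, by the minimal choice of $(G, \gamma, b)$ with respect to $|E(G-b)|$. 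In particular, $(\hat{G}, \hat{\gamma}, b)$ satisfies the conclusion of the theorem and hence has a certificate. But then Lemma~\ref{lem:critical} applies, producing a certificate for $(G, \gamma, b)$ itself. Since a certificate is exactly a witness that the min-max equality holds, this contradicts the fact that $(G, \gamma, b)$ is a counterexample.

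Having ruled this out, every reduction of $(G, \gamma, b)$ has flooding number at least $\tilde{\nu}(G, \gamma, b)+2$, which is precisely the hypothesis of Lemma~\ref{lem:loops}. Applying that lemma yields that each arc of $G-b$ lies in two non-loop elements of $M(G, \gamma, b)$, which is the statement of the claim.

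The main point to get right is the bookkeeping at the interface between the two lemmas: one must verify that the minimality invariant (smallest $|E(G-b)|$) is genuinely decreased by a reduction, so that the minimal counterexample really does guarantee that the reduction satisfies the theorem. The parity argument inside Lemma~\ref{lem:critical} already upgrades the bound $\tilde{\nu}(\hat{G}, \hat{\gamma}, b) \leq \tilde{\nu}(G, \gamma, b)+1$ to an equality, so I do not expect any real obstacle here; the only subtlety is the logical structure of chaining a contradiction (assume a good reduction exists $\Rightarrow$ it obeys the theorem $\Rightarrow$ $G$ obeys the theorem $\Rightarrow$ contradiction) rather than a direct construction.
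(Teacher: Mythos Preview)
Your proof is correct and follows essentially the same approach as the paper: use minimality of $|E(G-b)|$ to ensure every reduction has a certificate, invoke (the contrapositive of) Lemma~\ref{lem:critical} to conclude that every reduction has flooding number at least $\tilde{\nu}(G,\gamma,b)+2$, and then apply Lemma~\ref{lem:loops}. The paper's version is terser, but your explicit verification that a reduction strictly decreases $|E(G-b)|$ is a worthwhile detail to spell out.
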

\begin{proof}
By the choice of $(G, \gamma, b)$, all of its reductions have a certificate. So by Lemma~\ref{lem:critical}, all reductions have flooding number at least $\tilde{\nu}(G, \gamma, b)+2$. Thus, by Lemma~\ref{lem:loops}, each arc of $G-b$ is in two non-loop elements of $M(G, \gamma, b)$.
\end{proof}

\begin{claim}
\label{clm:basic}
There are no loops at $b$, the graph $G-b$ is connected, and $E(G-b)$ is non-empty.
\end{claim}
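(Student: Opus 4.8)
The plan is to exploit the lexicographic minimality of $(G,\gamma,b)$ (first $|E(G-b)|$, then $|E(G)|$): each of the three assertions can fail only if one can build a strictly smaller \rooted{} out of $(G,\gamma,b)$, and since that smaller graph is not a counterexample it has a certificate, which I would then lift back to $(G,\gamma,b)$ to contradict the choice of counterexample. In every case the lift succeeds because $\tilde\nu$ and the right-hand side of Theorem~\ref{thm:mainFlooding} change by exactly the same amount, so a certificate of the smaller graph stays tight for $(G,\gamma,b)$.

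First I would rule out loops at $b$. Suppose $\ell$ is such a loop and set $G':=G-\ell$; this is still Eulerian, has the same $|E(G-b)|$ but one fewer edge, hence is strictly smaller in the lexicographic order and so has a certificate $(X,\gamma')$. The key observation is that in any optimal flooding one may cut the circuit through $\ell$ at $b$ and re-attach it so that $\ell$ is its own circuit, without decreasing the number of non-zero circuits; thus $\tilde\nu(G)=\tilde\nu(G')+[\gamma(\ell)\neq 0]$. Since a loop at $b$ lies in $E(X)$ for every $X$ containing $b$, never lies in a cut, never lies in $G-X$, and is unchanged by shifting, every term of the bound for $(X,\gamma')$ agrees in $G$ and $G'$ except $\gamma'(E(X))$, which grows by exactly $[\gamma(\ell)\neq 0]$. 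Hence $(X,\gamma')$ certifies $(G,\gamma,b)$, a contradiction.

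Next, if $E(G-b)=\emptyset$ then, as there are now no loops at $b$, the graph $G$ is just $b$ joined to an independent set $v_1,\dots,v_k$ by parallel classes of even size. Here I would take $X=V(G)$, so $\delta(X)=\emptyset$ and $G-X$ is empty and the bound collapses to $\min_{\gamma'}\gamma'(E(G))$. Pairing the parallel edges at each $v_i$ into ``detours'' $b\to v_i\to b$ produces a flooding with $\sum_i\min(p_i,\deg(v_i)-p_i)$ non-zero circuits, where $p_i$ counts the non-zero edges at $v_i$, and shifting at the $v_i$ independently shows this equals $\min_{\gamma'}\gamma'(E(G))$. Together with Lemma~\ref{lem:easyDirection} this gives a certificate, a contradiction, so $E(G-b)\neq\emptyset$.

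Finally, suppose $G-b$ has components $H_1,\dots,H_t$ with $t\ge 2$, and for each $j$ let $G_j$ be the subgraph on $V(H_j)\cup\{b\}$. The cut $\delta(V(H_j))$ is even, so $G_j$ is connected and Eulerian and $(G_j,\gamma|_{G_j},b)$ is an \rooted{} that is strictly smaller than $(G,\gamma,b)$ in the lexicographic order (smaller in $|E(G-b)|$, unless $H_j$ carries all of $E(G-b)$, in which case it is smaller in $|E(G)|$ since the $b$-edges to the other components are absent); hence each $G_j$ has a certificate $(X_j,\gamma_j')$, which I may take to arise from a switching set $S_j\subseteq V(H_j)$ avoiding $b$. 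The heart of the matter is additivity $\tilde\nu(G)=\sum_j\tilde\nu(G_j)$: the union of optimal floodings of the $G_j$ is a flooding of $G$ for ``$\ge$'', while for ``$\le$'' one splits each circuit of an optimal flooding of $G$ at $b$ into excursions into single components and uses that refining a $b$-circuit-decomposition at $b$ never lowers the number of non-zero circuits. Setting $X:=\bigcup_j X_j$ and letting $\gamma'$ be the switching by $\bigcup_j S_j$, the fact that the components meet only in $b$ and that $b\in X$ makes each of $\gamma'(E(X))$, $\tfrac12|\delta(X)|$, and $\mathrm{odd}_{\gamma'}(G-X)$ split as a sum over $j$, so the bound for $(X,\gamma')$ equals $\sum_j\tilde\nu(G_j)=\tilde\nu(G)$ and certifies $(G,\gamma,b)$. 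I expect this last part to be the main obstacle: proving additivity cleanly (especially the ``$\le$'' direction), carrying out the term-by-term recombination of the certificate, and handling the bookkeeping that guarantees each $G_j$ is genuinely smaller in the lexicographic order.
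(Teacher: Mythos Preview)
Your proof is correct and follows the paper's strategy: use minimality to get certificates of smaller \rooteds{} and lift them back. The handling of loops at $b$ and of disconnected $G-b$ matches the paper almost verbatim (the paper simply asserts $\tilde\nu(G)=\tilde\nu(G_1)+\tilde\nu(G_2)$ without the detailed argument you sketch; in fact, since in a flooding no circuit revisits $b$ internally, every circuit already lies in a single $G_j$, so the ``$\le$'' direction needs no splitting at all).

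The one genuine difference is the order and the argument for $E(G-b)\neq\emptyset$. The paper proves connectivity of $G-b$ \emph{first}, so when it turns to $E(G-b)=\emptyset$ it may assume $G$ has exactly two vertices; it then takes an optimal flooding, shifts so that one zero circuit has both edges of weight~$0$, and uses optimality to deduce that every zero circuit has both edges of weight~$0$, whence $(V(G),\gamma)$ is a certificate. You instead treat $E(G-b)=\emptyset$ before connectivity and give a direct computation for arbitrarily many neighbours $v_1,\dots,v_k$, matching the achievable flooding value $\sum_i\min(p_i,\deg(v_i)-p_i)$ with $\min_{\gamma'}\gamma'(E(G))$. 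Both arguments are short; yours is a little more general, the paper's is a little slicker once the two-vertex reduction is in hand.
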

\begin{proof}
There is no loop at $b$ since, otherwise, any certificate for the graph obtained from $(G, \gamma, b)$ by deleting that loop also yields a certificate for $(G, \gamma, b)$. 

Now suppose for a contradiction that the graph $G-b$ is not connected. Then there are \rooteds{} $(G_1, \gamma_1, b)$ and $(G_2, \gamma_2, b)$ so that $G=G_1 \cup G_2$, the only vertex in common between $G_1$ and $G_2$ is $b$, and both $V(G_1)-\{b\}$ and $V(G_2)-\{b\}$ are non-empty. Then by the choice of $(G, \gamma, b)$, it follows that there are certificates $(X_1, \gamma_1')$ and $(X_2, \gamma_2')$ for $(G_1, \gamma_1, b)$ and $(G_2, \gamma_2, b)$, respectively. We may assume that $\gamma_1'$ and $\gamma_2'$ are obtained without shifting at $b$; any time we wish to shift at $b$, we can instead shift at every vertex other than $b$. Thus there exists a shifting $\gamma'$ of $\gamma$ which agrees with $\gamma_1'$ on $E(G_1)$ and $\gamma_2'$ on $E(G_2)$. Then $\tilde{\nu}(G, \gamma, b)=\tilde{\nu}(G_1, \gamma_1, b)+\tilde{\nu}(G_2, \gamma_2, b)$, and $(X_1 \cup X_2, \gamma')$ is a certificate for $(G, \gamma, b)$. This is a contradiction, which shows that $G-b$ is connected.

Finally, suppose for a contradiction that $E(G-b)$ is empty. From the last two paragraphs, this means that $(G, \gamma, b)$ has two vertices and no loops. Let $\C$ be an optimal flooding of $(G, \gamma, b)$. If $\C$ has no zero circuits, then $\tilde{\nu}(G, \gamma, b)=\deg(b)/2$ and $(\{b\}, \gamma)$ is a certificate. So we may assume that $\C$ contains a zero circuit $C$. Then, after possibly shifting at $b$, we may assume that both edges of $C$ have weight zero. Then, since every other circuit of $\C$ hits $C$ at the vertex other than $b$, this means that every zero circuit in $\C$ has both of its edges of weight zero (otherwise $\C$ would not be optimal). It follows that $(V(G), \gamma)$ is a certificate. This is again a contradiction and completes the proof of Claim~\ref{clm:basic}.
\end{proof}

The next claim almost completes the proof of the theorem.

\begin{claim}
\label{clm:rank}
The matroid $M(G, \gamma, b)$ has rank~$1$.
\end{claim}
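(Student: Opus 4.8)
The plan is to show $M(G,\gamma,b)$ has rank $1$ by combining the two non-loop structure from Claim~\ref{clm:twoNonLoop} with the transitivity restriction of Lemma~\ref{lem:transitivity}, and then deriving a contradiction if the rank is at least $2$. First I would record the easy cases: the rank cannot be $0$, since if $M(G,\gamma,b)$ had rank $0$ its unique basis would be empty, giving $\tilde\nu(G,\gamma,b)=\deg(b)/2$, whence $(\{b\},\gamma)$ is a certificate (parity is the only obstruction and there is none), contradicting that $(G,\gamma,b)$ is a counterexample. By Claim~\ref{clm:basic} the graph $G-b$ is connected, non-empty, and has no loops at $b$, so by Claim~\ref{clm:twoNonLoop} every arc $f$ of $G-b$ gives two non-loop elements $(f,0)$ and $(f,1)$ of the matroid; in particular the matroid has non-loop elements and rank at least $1$.

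The heart of the argument is to assume the rank is at least $2$ and contradict Lemma~\ref{lem:transitivity}. The key point I would exploit is that for each arc $f$ of $G-b$, the elements $(f,0)$ and $(f,1)$ are \emph{parallel} (each lies in some basis, but no basis contains both, since the two differ only in their $\alpha$-label on the \emph{same} arc and a single zero circuit has a unique representative for a fixed arc). So within the matroid, $(f,0)\parallel(f,1)$ for every arc $f$ of $G-b$. Next I would use connectivity of $G-b$ (Claim~\ref{clm:basic}) together with a \emph{local} relation coming from transitions: if $f$ and $g$ are two arcs sharing a common head, then Lemma~\ref{lem:transitivity} forbids any basis from containing both $(f,0)$ and $(g,1)$, and symmetrically both $(f,1)$ and $(g,0)$; combined with the fact that each of these four elements is non-loop, this forces $(f,0)\parallel(g,0)$ and $(f,1)\parallel(g,1)$ (the two ``$0$-arcs'' are parallel, as are the two ``$1$-arcs''). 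The hard part will be propagating these local parallel relations, via the transitivity of the parallel relation in a matroid, across all of $G-b$: since $G-b$ is connected, any two arcs are linked through a sequence of arcs sharing heads, so I would chain the local relations to conclude that \emph{all} the non-loop elements $(f,\alpha)$ with $f$ an arc of $G-b$ fall into a single parallel class. That would force the rank of $M(G,\gamma,b)$ to be exactly $1$.

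A subtlety I anticipate is matching the orientation bookkeeping: the relation from Lemma~\ref{lem:transitivity} is stated for arcs with the \emph{same head}, while in a matroid parallelism is symmetric, so I would need to handle both an arc $f$ and its reversal $f^{-1}$ and check that $(f,\alpha)$ and $(f^{-1},\beta)$ land in the correct parallel class (reversing an arc toggles how the label $\alpha$ records the weight of the subtrail up to that arc, so the correct pairing is $(f,\alpha)\parallel(f^{-1},1-\alpha)$ along a common circuit, or similar — this must be verified against the definition of a representative). Once the orientation conventions are pinned down, transitivity of parallel pairs does the rest. Therefore I would carefully invoke, for adjacent arcs, both directions of Lemma~\ref{lem:transitivity} to get parallelism in each of the two ``halves'' and then glue along the connected graph $G-b$.

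Finally, having shown the rank is exactly $1$, I would note that this forces $\tilde\nu(G,\gamma,b)=\deg(b)/2-1$: an optimal flooding has exactly $\deg(b)/2-\tilde\nu$ zero circuits, which equals the rank of $M(G,\gamma,b)$. With a single zero circuit, ``parity'' already shows $(\{b\},\gamma)$ is a certificate, contradicting the assumption that $(G,\gamma,b)$ is a counterexample, and completing the proof of Claim~\ref{clm:rank}. The main obstacle is the middle step — correctly converting the \emph{two-sided} forbidden-containment statements of Lemma~\ref{lem:transitivity} into genuine parallel relations and then propagating them through the connectivity of $G-b$ with the right orientation accounting.
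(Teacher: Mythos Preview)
Your approach is essentially the paper's: use Claim~\ref{clm:twoNonLoop} to get non-loops, then Lemma~\ref{lem:transitivity} plus transitivity of the parallel relation and connectivity of $G-b$ to force a single parallel class. However, there is a genuine gap. You only argue that every element $(f,\alpha)$ with $f$ an arc of $G-b$ lies in one parallel class, and then assert ``That would force the rank of $M(G,\gamma,b)$ to be exactly~$1$.'' This does not follow: the ground set of $M(G,\gamma,b)$ also contains elements $(f,\alpha)$ with $f$ incident to $b$, and you have not shown that any non-loop among these is parallel to the class you built. Concretely, an optimal flooding can contain a zero circuit of length two (two parallel $b$--$v$ edges of equal weight); every representative of such a circuit has its arc incident to $b$, so a basis can contain an element outside your set. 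Nothing in your argument rules out a second independent element coming from there.

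The fix is exactly what the paper does and uses the same tools you already invoked. First observe that in any flooding each circuit uses exactly two half-edges at $b$ (a counting argument from $|\mathcal{C}|=\deg(b)/2$), so no circuit visits $b$ intermediately. Hence if $f$ has head $b$ then $(f,1)$ is a loop, and $(f,0)$ is a non-loop iff $(f^{-1},\gamma(f^{-1}))$ is. For an arc $f$ with tail $b$, its head $v\neq b$ is incident to some arc $f'$ of $G-b$ (since $E(G-b)\neq\emptyset$ and $G-b$ is connected); now Lemma~\ref{lem:transitivity} forbids $\{(f,0),(f',1)\}$ and $\{(f,1),(f',0)\}$ from lying in a common basis, so any non-loop $(f,\alpha)$ is parallel to an element of your class. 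This extra paragraph is short but not optional. A smaller correction: Lemma~\ref{lem:transitivity} directly gives $(f,0)\parallel(g,1)$ and $(f,1)\parallel(g,0)$ for same-head arcs, not $(f,0)\parallel(g,0)$; you need the already-established $(f,0)\parallel(f,1)$ and transitivity to reach the conclusion you stated.
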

\begin{proof}
Let $F$ be the set of all elements of $M(G, \gamma, b)$ whose arc is not incident to $b$. We first prove that $F$ is contained in a {parallel class} of $M(G, \gamma, b)$; that is, we prove that $F$ has no loops and has rank exactly~$1$ in $M(G, \gamma, b)$. 

By Claim~\ref{clm:basic}, $E(G-b)$ is non-empty. By Claim~\ref{clm:twoNonLoop}, each arc of $G-b$ is in two non-loop elements of the flooding matroid. Thus $F$ has rank at least~$1$, and $F$ has no elements which are loops in the matroid. Furthermore, for each arc $f$ of $G-b$, the four elements in $F$ whose arc is $f$ or $f^{-1}$ are all parallel in the matroid. By Lemma~\ref{lem:transitivity}, for any arcs $f_0$ and $f_1$ of $G-b$ with the same head, the elements $(f_0, 0)$ and $(f_0, 1)$ in $F$ are parallel. So by the transitivity of parallel pairs, for any vertex $v \neq b$, all of the elements of $F$ whose arcs are incident to $v$ are parallel. Thus, using the transitivity of parallel pairs and the fact that $G-b$ is connected (see Claim~\ref{clm:basic}), $F$ is contained in a parallel class of $M(G, \gamma, b)$.

Now, consider an arc $f$ whose tail is $b$. By Claim~\ref{clm:basic}, there exists an arc $f'$ of $G-b$ which has the same head as $f$. By Lemma~\ref{lem:transitivity}, there is no basis which contains both $(f,0)$ and $(f', 1)$, and there is no basis which contains both $(f,1)$ and $(f', 0)$. Also, note that $(f^{-1},1)$ is a loop element of the matroid since $f^{-1}$ has $b$ as its head. Finally, notice that $(f^{-1},0)$ is a loop element of the matroid if and only if $(f,\gamma(f))$ is a loop element of the matroid. It follows from the transitivity of parallel pairs and the fact that $F$ is contained in a parallel class that all non-loop elements of $M(G, \gamma, b)$ are in the same parallel class. So $M(G, \gamma, b)$ has rank~$1$, as desired.
\end{proof}

Since $M(G, \gamma, b)$ has rank~$1$, the flooding number of $(G,\gamma, b)$ is $\deg(b)/2-1$. So the parity of $\gamma(E(G))$ is different from the parity of $\textrm{deg}(b)/2$. Since there are no loops at $b$ and $G-b$ is connected by Claim~\ref{clm:basic}, we get that $(\{b\}, \gamma)$ is a certificate with one odd component. This is a contradiction, which completes the proof of Theorem~\ref{thm:mainFlooding}.
\end{proof}

\section{Corollaries}
\label{sec:cor}

In this section we prove the corollaries of Theorem~\ref{thm:mainFlooding} which were mentioned in the introduction, as well two more corollaries of interest.

\subsection*{Regular graphs}
We begin by proving the two conjectures of M\'{a}\v{c}ajov\'{a} and \v{S}koviera~\cite{MacajovaOddEul} about regular graphs. 

First we prove a corollary about ``rooted'' graphs which are $2\ell$-regular except for possibly the root, whose degree can be smaller. We consider the signature where every edge has weight~$1$. The corollary says that if the root has degree $2d$ and the graph is $2d$-edge-connected and has an odd number of vertices, then the flooding number is~$d$. Note that we cannot replace the condition that ``there are an odd number of vertices'' with the condition that ``$|E(G)|$ and $d$ have the same parity''; in that case the graph could be bipartite (for instance when $d=\ell$ and $\ell$ is even) and thus have flooding number zero.

When $d = \ell$ the following corollary is precisely Conjecture~2 from~\cite{MacajovaOddEul}. After proving the corollary, we use it to prove Corollary~\ref{cor:corMS} from the introduction.

\begin{corollary}
\label{cor:almostReg}
For any positive integers $\ell$ and $d$ with $d \leq \ell$ and any $2d$-edge-connected graph $G$ with an odd number of vertices and a vertex $b$ of degree~$2d$ so that every other vertex has degree $2\ell$, there exists a circuit-decomposition of size $d$ where each circuit has an odd number of edges and begins and ends at $b$.
\end{corollary}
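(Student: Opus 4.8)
The plan is to derive Corollary~\ref{cor:almostReg} directly from the min-max theorem (Theorem~\ref{thm:mainFlooding}) applied to the \rooted{} $(G, \gamma, b)$, where $\gamma$ assigns weight~$1$ to every edge. First I would observe that since $G$ is $2d$-edge-connected and has an Eulerian structure (every vertex other than $b$ has even degree $2\ell$, and $b$ has even degree $2d$, and $G$ is connected), the tuple $(G,\gamma,b)$ is a legitimate \rooted{}. With the all-ones signature, a circuit is non-zero exactly when it has an odd number of edges, so the flooding number $\tilde{\nu}(G,\gamma,b)$ is precisely the maximum size of a circuit-decomposition into odd circuits hitting $b$. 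Since $\deg(b)/2 = d$, it suffices to prove that $\tilde{\nu}(G,\gamma,b) = d$; equivalently, that the right-hand side of Theorem~\ref{thm:mainFlooding} equals $d$.

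The heart of the argument is to show that the minimum in Theorem~\ref{thm:mainFlooding} is at least $d$ (the upper bound $\tilde\nu \le d$ is automatic since $\deg(b)/2 = d$). So I would fix an arbitrary shifting $\gamma'$ of $\gamma$ and an arbitrary vertex-set $X \ni b$, and show
\begin{align*}
\gamma'(E(X)) + \tfrac{1}{2}|\delta(X)| - \mathrm{odd}_{\gamma'}(G-X) \ \ge\ d.
\end{align*}
The key point is that $2d$-edge-connectivity controls $|\delta(X)|$: if $X \neq V(G)$, then every component $Y$ of $G-X$ satisfies $|\delta(Y)| \ge 2d$ unless $b \notin X$, but here $b \in X$; more carefully, since $b$ has degree $2d$ and lies in $X$, and the graph is $2d$-edge-connected, any nontrivial cut separating a component of $G-X$ from $b$ has size at least $2d$, so $|\delta(X)| \ge 2d \cdot \mathrm{(number\ of\ components\ of\ }G-X)$ when $G-X \neq \emptyset$. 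Combined with $\mathrm{odd}_{\gamma'}(G-X) \le (\text{number of components of } G-X)$ and the bound $\tfrac12|\delta(X)| \ge d \cdot (\text{number of components})$, the loss from subtracting $\mathrm{odd}_{\gamma'}$ is absorbed. The case $X = V(G)$ is handled separately: then $\delta(X) = \emptyset$ and $G-X$ has no components, so the expression is $\gamma'(E(G))$, and I must show $\gamma'(E(G)) \ge d$. Since shifting preserves the weight of every circuit and hence the parity of $\gamma'(E(G))$, and $\gamma(E(G)) = |E(G)|$, the parity of $\gamma'(E(G))$ equals the parity of $|E(G)|$. Here is where the odd-vertex-count hypothesis enters: counting $\sum_v \deg(v) = 2|E(G)|$ with $b$ of degree $2d$ and the other (even number of?) — no, the odd number of vertices means $(n-1)$ vertices of degree $2\ell$ plus one of degree $2d$, so $|E(G)| = \ell(n-1) + d$, and with $n$ odd, $n-1$ is even, giving $|E(G)| \equiv d \pmod 2$; thus $\gamma'(E(G)) \equiv d$ and in particular if it were to drop below $d$ it could only reach $d-2, d-4, \ldots$, which I must rule out using connectivity again.

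The main obstacle I anticipate is the bookkeeping in the boundary cases of the edge-connectivity estimate, particularly making the inequality $\tfrac12|\delta(X)| - \mathrm{odd}_{\gamma'}(G-X) \ge \tfrac12|\delta(X)| - (\text{components of } G-X)$ tight enough that it never dips below what $\gamma'(E(X))$ can compensate. Concretely, for each component $Y$ of $G-X$ the contribution $\tfrac12|\delta(Y)| - [Y \text{ is } \gamma'\text{-odd}]$ must be shown to be at least $d-1$ when $Y$ is odd, and since $|\delta(Y)| \ge 2d$ forces $\tfrac12|\delta(Y)| \ge d$, this contribution is at least $d - 1 \ge d-1$, which suffices only if there is additional slack from $b$'s own boundary or from $\gamma'(E(X))$. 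I would therefore argue that the edges of $\delta(X)$ incident to $b$ contribute an extra $d$ to $\tfrac12|\delta(X)|$ beyond what the components demand, so that summing over all components together with $b$'s cut gives at least $d$ overall. After establishing $\tilde\nu(G,\gamma,b) = d$, the corollary follows immediately by invoking the definition of the flooding number as the maximum number of non-zero (odd) circuits in a circuit-decomposition hitting $b$, and then reinterpreting these odd circuits as required. This reduces cleanly to the min-max theorem, with the arithmetic of the cut sizes being the only delicate part.
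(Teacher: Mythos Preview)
Your overall setup is correct and matches the paper's: take the all-ones signature, apply Theorem~\ref{thm:mainFlooding}, and verify that every pair $(X,\gamma')$ gives value at least~$d$. However, the two cases you flag as ``delicate'' are precisely where the real content lies, and your proposed resolutions do not work.

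For $X=V(G)$ you only obtain $\gamma'(E(G))\equiv d\pmod 2$; you give no mechanism to rule out $\gamma'(E(G))\in\{d-2,d-4,\ldots\}$, and edge-connectivity is of no direct help here since there is no cut in sight. If $\gamma'$ is obtained from $\gamma$ by shifting at the vertices of some set $A\subseteq V(G)$ and $B=V(G)\setminus A$, then $\gamma'(E(G))=|E(A)|+|E(B)|$, and one must actually show this is at least~$d$. The paper does so by a double-counting argument on the $(A,B)$ cut using the degree hypotheses and $|B|<|A|$ (which follows from $|V(G)|$ odd).

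For $X\neq V(G)$ your claim that ``the edges of $\delta(X)$ incident to $b$ contribute an extra $d$ to $\tfrac12|\delta(X)|$ beyond what the components demand'' is simply false: every edge of $\delta(X)$ already lies in $\delta(Y)$ for exactly one component~$Y$, so $|\delta(X)|=\sum_Y|\delta(Y)|$ with no surplus from~$b$. Thus when there is a single component $Y$ that is $\gamma'$-odd with $|\delta(Y)|=2d$ and $\gamma'(E(X))=0$, your bound gives only $d-1$, and this is the hard case. The paper resolves it by again passing to the bipartition $(A,B)$ of $X$ determined by the shifting, proving $|A|-|B|\in\{0,1\}$ via a counting inequality, and then deriving a parity contradiction (Claim~\ref{claim:parity} in the paper) from the hypotheses that $Y$ is $\gamma'$-odd and $|V(G)|$ is odd. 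This is where the odd-vertex-count assumption is genuinely used beyond the global parity observation you made.
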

\begin{proof}
Throughout the proof we write $n \equiv m$ to mean that integers $n$ and $m$ are equivalent modulo~$2$. The proof is straightforward, although the case analysis is somewhat tedious. First of all, we may assume that $d \geq 2$ since otherwise the corollary holds just because $G$ is an Eulerian graph with an an odd number of edges. (Note that $|E(G)|\equiv \ell(|V(G)|-1)+d \equiv d$ since $|V(G)|$ is odd.) 

Now let $\gamma$ denote the signature of $G$ where every edge is given weight~$1$; thus we are trying to show that the \rooted{} $(G, \gamma, b)$ has flooding number $d$. By Theorem~\ref{thm:mainFlooding}, this \rooted{} has a certificate $(X, \gamma')$. That is, $X$ is a set of vertices which contains $b$ and $\gamma'$ is a shifting of $\gamma$ so that the flooding number is equal to\begin{align}
    \label{eqn:cor}
    \gamma'(E(X))+\frac{1}{2}|\delta(X)|-{\mathrm{odd}}_{\gamma'}(G-X).
\end{align}

Shifting at a vertex outside of $X$ does not change equation~(\ref{eqn:cor}); in particular, it does not change the odd components because every vertex has even degree. (If we shift at a vertex inside a set $Y\subseteq V(G)$, then the parity of $\gamma'(E(Y) \cup \delta(Y))$ does not change.) Thus we may assume that $\gamma'$ is obtained from $\gamma$ by shifting once at each vertex inside a set $A \subseteq X$. We set $B \coloneqq X-A$ so that $(A,B)$ partitions $X$. We may assume that $|B| \leq |A|$; there is symmetry between $A$ and $B$ since we can also shift at all vertices in $A \cup B$ without changing equation~(\ref{eqn:cor}).

Suppose first that $X = V(G)$. Then since $|V(G)|$ is odd we actually have $|B| \leq |A|-1$. By counting the edges between $A$ and $B$ in two ways, we find that\begin{align*}
    \sum_{v \in A}\deg(v) - 2|E(A)| = \sum_{v \in B}\deg(v) - 2|E(B)| \leq 2\ell|B| \leq 2\ell(|A|-1).
\end{align*}\noindent Since $2\ell(|A|-1)+2d \leq \sum_{v \in A}\deg(v)$, we obtain that $d \leq |E(A)|$. So the flooding number is at least $d$ since $\gamma'(E(X)) \geq |E(A)|$. Thus we may assume that $G-X$ has at least one component. 

Let $Y$ be the vertex-set of a component of $G-X$. Notice that $Y$ ``contributes'' either $|\delta(Y)|/2-1$ or $|\delta(Y)|/2$ to equation~(\ref{eqn:cor}), depending on whether it is $\gamma'$-odd. Since $G$ is $2d$-edge-connected, each component must therefore ``contribute'' at least $d-1$ to equation~(\ref{eqn:cor}). So, since $d \geq 2$, we may assume that $Y = V(G)-X$, that $Y$ is $\gamma'$-odd, and that $|\delta(Y)|=2d$. We may also assume that that $\gamma'(E(X))=0$ (or, equivalently, that $E(A)$ and $E(B)$ are empty). We now aim for a contradiction.

Let $k_1$ and $k_2$ denote the number of edges between $Y$ and $A$ and between $Y$ and $B$, respectively. Using a counting argument for the equality step, we get that\begin{align}
\label{eqn:tightCor}
2\ell(|A|-1)\leq\sum_{v \in A}\deg(v)-2d \leq \sum_{v \in A}\deg(v) - k_1 = \sum_{v \in B}\deg(v) - k_2 \leq 2\ell|B|.
\end{align}\noindent So $|A|-1 \leq |B|$, and if $|B|=|A|-1$ then every step of Inequality~(\ref{eqn:tightCor}) is tight. Recall that $|B|\leq |A|$ by assumption, so in fact there are only two options for the size of $B$; either $|B|=|A|-1$ or $|B|=|A|$. We will split into cases based on the size of $B$, but first we prove the following claim which will be used in both cases.

\begin{claim}
\label{claim:parity}
We have $k_1 +1\equiv \ell|Y|$.
\end{claim}
\begin{proof}
First, notice that $2\ell|Y|=\sum_{y \in Y}\deg(y) =2|E(Y)|+|\delta(Y)|=2|E(Y)|+2d$. So $\ell|Y|=|E(Y)|+d$, and thus $|E(Y)|$ has the same parity as $\ell|Y|+d$. Thus, since $Y$ is $\gamma'$-odd and $|\delta(Y)|=2d$,\begin{align*}
d+1 \equiv \gamma'(E(Y) \cup \delta(Y))\equiv |E(Y)\cup\delta(Y)|+k_1 \equiv |E(Y)|+k_1 \equiv \ell|Y|+d+k_1.
\end{align*}
This proves the claim.
\end{proof}

Now we split into two cases based on the size of $B$. We will obtain a contradiction in both cases.

\smallskip
\noindent\emph{Case 1:} $|B|=|A|-1$
\smallskip

Then $1 \equiv |V(G)|\equiv |A|+|B|+|Y|\equiv 1+|Y|$, and so $|Y|\equiv 0$. Then by Claim~\ref{claim:parity}, $k_1+1\equiv\ell|Y| \equiv 0$. However, since every step of Inequality~(\ref{eqn:tightCor}) is tight, we also have that $\sum_{v \in A}\deg(v)-2d = \sum_{v \in A}\deg(v) - k_1$. So $k_1=2d$. This is a contradiction since we just proved that $k_1+1\equiv 0$.

\smallskip
\noindent\emph{Case 2:} $|B|=|A|$
\smallskip

We may assume that $b \in B$ since now $A$ and $B$ are symmetric. So we have that $k_1-k_2 = 2\ell-2d$; intuitively this is because the extra edges from $A$ (rather than $B$) to $Y$ must make up for the smaller degree of $b$. Formally, this is because\begin{align*}
2\ell|A|-k_1=\sum_{v \in A}\deg(v)-k_1 = \sum_{v \in B}\deg(v)-k_2 = 2\ell(|B|-1)+2d-k_2.
\end{align*}Since $|A|=|B|$, we obtain that $k_1-k_2 = 2\ell-2d$ by simplifying this equation.

We also have the equation $k_1+k_2=|\delta(Y)|=2d$. Summing the corresponding sides, we obtain $2k_1 = 2\ell$. Since $k_1 +1\equiv \ell|Y|$ by Claim~\ref{claim:parity}, we obtain $\ell|Y| \equiv \ell+1$. Thus $\ell$ is odd and $|Y|$ is even. However, we also have that $1 \equiv |V(G)| \equiv |A|+|B|+|Y| \equiv |Y|$ since $|A|=|B|$. This contradicts completes this case and therefore the proof of Corollary~\ref{cor:almostReg}. 
\end{proof}

Now we use Corollary~\ref{cor:almostReg} to prove the other conjecture of M\'{a}\v{c}ajov\'{a} and \v{S}koviera. This corollary was mentioned in the introduction and is re-stated below.

\corMS*
\begin{proof}
Going for a contradiction, suppose not. Choose a counterexample $G$ with as few vertices as possible. Then $G$ is not $2\ell$-edge-connected, since otherwise we could apply Corollary~\ref{cor:almostReg} with any ``root'' vertex. 

Now let $S$ be a set of vertices so that both $S$ and $V(G)-S$ are non-empty; subject to that, choose $S$ so that $|\delta(S)|$ is as small as possible. Since $G$ has an odd number of vertices, we may assume that $|S|$ is odd and $|V(G)-S|$ is even. Since $G$ is Eulerian, there is a positive integer $d$ so that $|\delta(S)|=2d$. Finally, since $G$ is not $2\ell$-edge-connected, we have $d<\ell$.

Now let $H$ be the graph which is obtained from $G$ by identifying $S$ to a single vertex $b$ and then deleting all loops at $b$. (That is, $H$ has vertex-set $(V(G)-S)\cup\{b\}$, the induced subgraph of $H$ on $V(G)-S$ is the same as the induced subgraph of $G$ on $V(G)-S$, and $H$ has one edge with ends $b$ and $x$ for each edge in $\delta(S)$ whose end outside of $S$ is $x$.) We claim that $H$ and $b$ satisfy all of the conditions of Corollary~\ref{cor:almostReg}. The key point is that $H$ is $2d$-edge-connected by the minimality of $|\delta(S)|$ in $G$; for each subset $X$ of $V(H)$ which contains $b$, the number of edges in $\delta(X)$ in $H$ is equal to the number of edges in $\delta(X-\{b\}\cup S)$ in $G$.

Thus, by Corollary~\ref{cor:almostReg}, the graph $H$ has a circuit-decomposition of size~$d$ where each circuit has an odd number of edges and begins and ends at $b$. This yields a collection $\mathcal{T}$ of $d$ trails in $G$ so that\begin{enumerate}
    \item $E(G)-E(S)$ is the disjoint union of the edge-sets of the trails in~$\mathcal{T}$, and
    \item each trail in $\mathcal{T}$ has an odd number of edges and begins and ends in $X$.
\end{enumerate}\noindent Let $G'$ be the graph which is obtained from the subgraph of $G$ induced by $S$ by adding, for each trail $T \in \mathcal{T}$, an edge with the same ends as $T$. 

This graph $G'$ is $2\ell$-regular, has an odd number of vertices, and has fewer vertices than $G$. It is also connected; otherwise, if $Y$ was the vertex-set of one of its components, then in $G$ we would have $|\delta(Y)|<|\delta(S)|$ and a contradiction to the choice of $S$. So, since $G$ is a minimum counterexample to Corollary~\ref{cor:corMS}, the graph $G'$ has a circuit-decomposition $\C'$ of size $\ell$ where all circuits have an odd number of edges and begin and end at the same vertex. We can obtain a circuit-decomposition $\C$ of $G$ with the same properties by replacing the $d$ new edges of $G'$ with the corresponding trails in $\mathcal{T}$. This contradicts the fact that $G$ is a counterexample and completes the proof of Corollary~\ref{cor:corMS}.
\end{proof}

\subsection*{Packing and the Erd\H{o}s-P\'{o}sa property}
Now we prove the corollary from the introduction that relates ``packing'' and ``decomposing''. It is re-stated below for convenience.

\corPacking*
\begin{proof}
Let $(G,\gamma, b)$ be a $4$-edge-connected \rooted{} so that there exists a collection of $\ell$ edge-disjoint non-zero circuits which hit $b$. By Theorem~\ref{thm:mainFlooding}, there is a certificate $(X, \gamma')$ for the flooding number. Moreover, since $G$ is $4$-edge-connected, $\mathrm{odd}_{\gamma'}(G-X) \leq \frac{1}{4}|\delta(X)|$. So \begin{align*}
    2\tilde{\nu}(G, \gamma, b) &=2\gamma'(E(X))+|\delta(X)|-2\mathrm{odd}_{\gamma'}(G-X) \\
    & \geq 2\gamma'(E(X))+\frac{1}{2}|\delta(X)|\\
    & \geq \gamma'(E(X))+\frac{1}{2}|\delta(X)|\\
    & \geq \ell,
\end{align*} since each of the $\ell$ edge-disjoint non-zero circuits which hit $b$ must use either a non-zero edge in $E(X)$, or at least two edges in $\delta(X)$. It follows that $\tilde{\nu}(G, \gamma, b) \geq \lceil \ell/2 \rceil$ since $\tilde{\nu}(G, \gamma, b)$ is an integer. 
\end{proof}

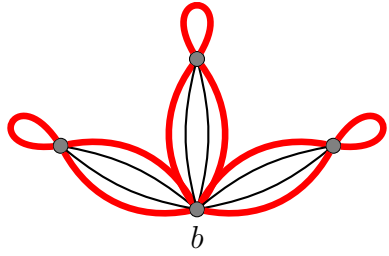
\begin{figure}
\centering
\begin{tikzpicture}[scale=1, every node/.style={MyNode}]
    \def \w {2}
    \node (B1) at (155:\w) {};
    \draw[MyRedArc] (B1) to [out=125,in=185,looseness=25] (B1);
    \node (B2) at (90:\w) {};
    \draw[MyRedArc] (B2) to [out=60,in=120,looseness=25] (B2);
    \node (B3) at (25:\w) {};
    \draw[MyRedArc] (B3) to [out=-5,in=55,looseness=25] (B3);
    \node[label=below:$b$] (b) at (0,0) {};
    \foreach \i in {1,2,3}{
        \draw[MyRedArc] (b) to [bend left=36] (B\i);
        \draw[thick] (b) to [bend left=14] (B\i);
        \draw[thick] (b) to [bend right=14] (B\i);
        \draw[MyRedArc] (b) to [bend right=36] (B\i);
    }
\end{tikzpicture}
\caption{A $4$-edge-connected \rooted{} where the bound in Corollary~\ref{cor:packingCovering} is tight.}
\label{fig:packingTight}
\end{figure}

This proof of Corollary~\ref{cor:packingCovering} also shows how to construct an example where the bound is tight; see Figure~\ref{fig:packingTight}. A similar construction, but where each component of $G-b$ has two edges to $b$, shows that $4$-edge-connectivity is necessary.

We mentioned in the introduction that packing problems have been particularly well-studied in relation to the Erd\H{o}s-P\'{o}sa property. In fact, Corollary~\ref{cor:packingCovering} can be combined with a theorem of Kakimura, Kawarabayashi, and Kobayashi~\cite{packingRooted} to obtain the Erd\H{o}s-P\'{o}sa property for the flooding number of a $4$-edge-connected \rooted{}. The following final corollary of Theorem~\ref{thm:mainFlooding} obtains this type of property directly; Figure~\ref{fig:packingTight} shows that the bounds are tight.

\begin{corollary}
\label{cor:EPP}
If $(G, \gamma, b)$ is a $4$-edge-connected \rooted{} with $\tilde{\nu}(G, \gamma, b)\leq \ell$, then there exists a set $F$ of at most $3\ell$ edges so that $G-F$ has no non-zero circuit which begins and ends at $b$.
\end{corollary}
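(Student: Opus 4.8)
The plan is to invoke Theorem~\ref{thm:mainFlooding} to obtain a certificate $(X,\gamma')$ with $\gamma'(E(X))+\frac12|\delta(X)|-\mathrm{odd}_{\gamma'}(G-X)=\tilde{\nu}(G,\gamma,b)\le\ell$, and then to build $F$ explicitly from this certificate. Since shifting does not change which circuits are non-zero, it is harmless to replace $\gamma$ by $\gamma'$, so I will assume $\gamma=\gamma'$. The set I propose is: all non-zero edges lying in $E(X)$, together with, for each component $Y$ of $G-X$, all but one edge of $\delta(Y)$.

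The first thing to verify is that $G-F$ has no non-zero circuit at $b$. For each component $Y$ of $G-X$, the graph $G-F$ has exactly one edge joining $Y$ to the rest of the graph, so no closed trail can both meet $b\in X$ and enter $Y$: it would have to cross that single edge at least twice, which a trail cannot do. Hence every circuit at $b$ in $G-F$ uses only edges with both ends in $X$, and since all non-zero edges of $E(X)$ have been deleted, any such circuit is zero.

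The crux is the count. Write $w=\gamma'(E(X))$ and $c=|\delta(X)|$, let $p$ be the number of components of $G-X$, and put $k=\mathrm{odd}_{\gamma'}(G-X)\le p$. Then $|F|=w+(c-p)$, because each component contributes $|\delta(Y)|-1$ deleted boundary edges and $\sum_Y|\delta(Y)|=c$. Four-edge-connectivity forces $|\delta(Y)|\ge4$ for every component (the cut is even and never $2$), hence $c\ge4p$, that is, $p\le c/4$. Combining the certificate bound $w+\frac{c}{2}-k\le\ell$ with $k\le p\le c/4$ gives $3\ell-|F|\ge 2w+\frac{c}{2}-2p\ge2w\ge0$, so $|F|\le3\ell$, as wanted.

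The main obstacle I anticipate is obtaining the constant $3$ rather than $4$. The naive choice $F=\{\text{non-zero edges of }E(X)\}\cup\delta(X)$ already destroys every non-zero $b$-circuit, but it only yields $|F|\le4\ell$. The saving comes from leaving one edge of each $\delta(Y)$ undeleted: this still \emph{dead-ends} the component while cutting the boundary cost from $|\delta(Y)|$ to $|\delta(Y)|-1$, and four-edge-connectivity is precisely what lets the resulting $-p$ term absorb the gap. I would also record the degenerate case $X=V(G)$ (no components, so $F$ is just the non-zero edges of $E(X)$) and sanity-check tightness against Figure~\ref{fig:packingTight}, where $X=\{b\}$, $w=0$, and each of the three components retains one of its four boundary edges, giving $|F|=9=3\ell$.
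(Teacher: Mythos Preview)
Your proof is correct and follows the same approach as the paper: take a certificate $(X,\gamma')$, let $F$ consist of the non-zero edges inside $E(X)$ together with all but one edge of $\delta(Y)$ for each component $Y$ of $G-X$, and use $|\delta(Y)|\ge 4$ to bound $|F|$. Your global inequality chain $3\ell-|F|\ge 2w+c/2-3k+p\ge 2w+c/2-2p\ge 2w\ge 0$ is equivalent to the paper's component-by-component check that $3|\delta(Y)|/2-3\ge|\delta(Y)|-1$; you also explicitly verify that $G-F$ has no non-zero $b$-circuit, which the paper leaves implicit.
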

\begin{proof}
By Theorem~\ref{thm:mainFlooding}, the \rooted{} $(G,\gamma, b)$ has a certificate $(X, \gamma')$. Let $F_1$ be the set of all edges in $E(X)$ which are non-zero according to $\gamma'$, and let $F_2$ be any subset of $\delta(X)$ which is obtained by deleting one edge incident to each component of $G-X$. Then \begin{align*}
    3\ell \geq 3\tilde{\nu}(G, \gamma, b) \geq \gamma'(E(X))+3\left(\frac{|\delta(X)|}{2}-\mathrm{odd}_{\gamma'}(G-X)\right).
\end{align*} 

It is clear that $\gamma'(E(X))=|F_1|$. We claim that $3(|\delta(X)|/2-\mathrm{odd}_{\gamma'}(G-X))\geq |F_2|$. To see this, observe that a component of $G-X$ with vertex-set $Y$ ``contributes'' either $3|\delta(Y)|/2-3$ or $3|\delta(Y)|/2$ to the expression, depending on whether $Y$ is $\gamma'$-odd. Moreover, $3|\delta(Y)|/2-3 \geq |\delta(Y)|-1$ since $G$ is $4$-edge-connected. The corollary follows.
\end{proof}


\section*{Acknowledgments} 
The author would like to express their deepest thanks to Jim Geelen and Paul Wollan for their input on this paper. The author would also like to thank Louis Esperet for suggesting the connection to the conjectures of M\'{a}\v{c}ajov\'{a} and \v{S}koviera, and to James Davies for feedback which improved the presentation. Finally, the author would like to thank Donggyu Kim for finding a critical mistake in Lemma~\ref{lem:main} in an earlier version of this paper.

\bibliographystyle{amsplain}

\begin{thebibliography}{99}
\bibitem{isoSystems}
A.~Bouchet, \emph{Isotropic systems}, European J. Combin. \textbf{8} (1987), no.~3, 231--244.

\bibitem{graphicIsoSystems}
A.~Bouchet{}, \emph{Graphic presentations of isotropic systems}, J. Combin. Theory Ser. B \textbf{45} (1988), no.~1, 58--76.

\bibitem{BouchetCircleChar}
A.~Bouchet, \emph{Circle graph obstructions}, J. Combin. Theory Ser. B \textbf{60} (1994), no.~1, 107--144.

\bibitem{ChudnovskyAPaths}
M.~Chudnovsky, J.~Geelen, B.~Gerards, L.~Goddyn, M.~Lohman, and P.~Seymour, \emph{Packing non-zero {$A$}-paths in group-labelled graphs}, Combinatorica \textbf{26} (2006), no.~5, 521--532.

\bibitem{Churchley17}
R.~Churchley, \emph{Odd disjoint trails and totally odd graph immersions}, {P}h{D} thesis, Simon Fraser University (2017).

\bibitem{ChurchleyPacking}
R.~Churchley, B.~Mohar, and H.~Wu, \emph{Weak duality for packing edge-disjoint odd {$(u,v)$}-trails}, Proceedings of the {T}wenty-{S}eventh {A}nnual {ACM}-{SIAM} {S}ymposium on {D}iscrete {A}lgorithms, ACM, New York, 2016, pp.~2086--2094.

\bibitem{diestelBook}
R.~Diestel, \emph{Graph theory}, fifth ed., Graduate Texts in Mathematics, vol. 173, Springer, Berlin, 2017.

\bibitem{gridThmVM}
J.~Geelen, {O.} Kwon, R.~McCarty, and P.~Wollan, \emph{The grid theorem for vertex-minors}, J. Combin. Theory Ser. B (2020).

\bibitem{HararyBalanced}
F.~Harary, \emph{On the notion of balance of a signed graph}, Michigan Math. J. \textbf{2} (1953/54), 143--146 (1955).

\bibitem{packingRooted}
N.~Kakimura, K.~Kawarabayashi, and Y.~Kobayashi, \emph{Packing edge-disjoint odd {E}ulerian subgraphs through prescribed vertices in 4-edge-connected graphs}, SIAM J. Discrete Math. \textbf{31} (2017), no.~2, 766--782.

\bibitem{ErdosPosa4EdgeConn}
K.~Kawarabayashi and Y.~Kobayashi, \emph{Edge-disjoint odd cycles in 4-edge-connected graphs}, J. Combin. Theory Ser. B \textbf{119} (2016), 12--27.

\bibitem{KotzigImmersions}
A.~Kotzig, \emph{Eulerian lines in finite {$4$}-valent graphs and their transformations}, Theory of {G}raphs ({P}roc. {C}olloq., {T}ihany, 1966), Academic Press, New York, 1968, pp.~219--230.

\bibitem{shrubVM}
O.~Kwon, R.~McCarty, S.~Oum, and P.~Wollan, \emph{Obstructions for bounded shrub-depth and rank-depth}, Journal of Combinatorial Theory, Series B \textbf{149} (2021), 76--91.

\bibitem{MacajovaOddEul}
E.~M\'{a}\v{c}ajov\'{a}{} and M.~\v{S}koviera, \emph{Odd decompositions of {E}ulerian graphs}, SIAM J. Discrete Math. \textbf{31} (2017), no.~3, 1923--1936.

\bibitem{McCartyThesis}
R.~McCarty, \emph{Local structure for vertex-minors}, {P}h{D} thesis, University of Waterloo (2021).

\bibitem{RWandWQO}
S.~Oum, \emph{Rank-width and well-quasi-ordering}, SIAM J. Discrete Math. \textbf{22} (2008), no.~2, 666--682.

\bibitem{RWSurvey}
S.~Oum{}{}{}{}{}, \emph{Rank-width: algorithmic and structural results}, Discrete Appl. Math. \textbf{231} (2017), 15--24.

\bibitem{ReedEscherWall}
B.~Reed, \emph{Mangoes and blueberries}, Combinatorica \textbf{19} (1999), no.~2, 267--296.

\bibitem{graphMinors16Structure}
N.~Robertson{}{}{}{}{}{} and P.~Seymour, \emph{Graph minors. {XVI}. {E}xcluding a non-planar graph}, J. Combin. Theory Ser. B \textbf{89} (2003), no.~1, 43--76.

\bibitem{graphMinors20WQOImmersions}
N.~Robertson{}{}{}{}{} and P.~Seymour, \emph{Graph minors {XXIII}. {N}ash-{W}illiams' immersion conjecture}, J. Combin. Theory Ser. B \textbf{100} (2010), no.~2, 181--205.

\bibitem{ZaslavskySignedMatroid}
T.~Zaslavsky, \emph{Signed graphs}, Discrete Appl. Math. \textbf{4} (1982), no.~1, 47--74.

\end{thebibliography}


\begin{aicauthors}
\begin{authorinfo}[rm]
  Rose McCarty\\
  School of Mathematics and School of Computer Science\\
  Georgia Institute of Technology\\
  Atlanta, Georgia, USA\\
  rmccarty3\imageat{}gatech\imagedot{}edu \\
  \url{https://mccarty.math.gatech.edu/}
\end{authorinfo}
\end{aicauthors}

\end{document}